\documentclass[12pt]{article}
\usepackage[english]{babel}
\usepackage{amsmath,amsthm}
\usepackage{amsfonts}
\usepackage{graphicx}
\usepackage[english]{babel}
\usepackage{comment}
\usepackage{bbm}
\newtheorem{thm}{Theorem}[section]
\newtheorem{cor}[thm]{Corollary}
\newtheorem{lem}[thm]{Lemma}

\theoremstyle{definition}

\theoremstyle{remark}
\newtheorem{rem}[thm]{Remark}
\numberwithin{equation}{section}

 \newcommand\R{\mathbb{R}}

 \newcommand\E{\mathbb{E}}

 \DeclareMathOperator*{\var}{Var}

\begin{document}

\title{On the Besov regularity of the bifractional Brownian motion}%
\author{Brahim Boufoussi and Yassine Nachit\\
Department of Mathematics, Faculty of Sciences Semlalia,\\
Cadi Ayyad University, 2390 Marrakesh, Morocco\\
boufoussi@uca.ac.ma, yassine.nachit.fssm@gmail.com}%
 \date{}
\maketitle
\begin{abstract}
  Our aim in this paper is to improve H\"{o}lder continuity results for the bifractional Brownian motion (bBm) $(B^{\alpha,\beta}(t))_{t\in[0,1]
}$ with $0<\alpha<1$ and $0<\beta\leq 1$. We prove that almost all paths of the bBm belong (resp. do not belong) to  the Besov spaces $\mathbf{Bes}(\alpha \beta,p)$ (resp. $\mathbf{bes}(\alpha \beta,p)$) for any $\frac{1}{\alpha \beta}<p<\infty$, where $\mathbf{bes}(\alpha \beta,p)$ is a separable subspace of $\mathbf{Bes}(\alpha \beta,p)$. We also show the It\^{o}-Nisio theorem for the bBm with $\alpha \beta>\frac{1}{2}$ in the H\"{o}lder spaces $\mathcal{C}^{\gamma}$, with $\gamma<\alpha \beta$.
\end{abstract}

{\bf Keywords:} Bifractional Brownian motion, Self-similar, Besov spaces, It\^{o}-Nisio  \\                                  

{\bf Mathematics Subject Classification (2010):}Primary: 60G15; Secondary: 60G18, 60G17.

\section{Introduction}
Let $(B^{\alpha,\beta}(t))_{t\geq 0}$ be a bifractional Brownian motion (bBm for short), i.e., a centred real-valued Gaussian process with covariance function
\begin{equation}\label{R}
    R^{\alpha,\beta }(s,t):=R(s,t)=\frac{1}{2^\beta }\left((t^{2\alpha}+s^{2\alpha})^\beta -|t-s|^{2\alpha \beta }\right),
  \end{equation}
 where $\alpha\in (0,1)$ and $\beta \in (0,1]$. Point out that, when $\beta =1$,  $B^{\alpha,1}$ is a fractional Brownian motion with Hurst parameter $\alpha \in (0,1)$. However the increments of $B^{\alpha ,\beta }$ are not stationary except for the case when $\beta =1$. The bBm has the following general properties: it is self-similar with index $\alpha \beta $, that is, for every $a>0$,
  \begin{equation}\label{self similar}
    \{B^{\alpha ,\beta }(at),\;t\geq 0\}\overset{d}{=}\{a^{\alpha \beta }B^{\alpha ,\beta }(t),\;t\geq 0\},
  \end{equation}
  where $X\overset{d}{=}Y$ means that the two processes have the same finite-dimensional distributions. It is a quasi-helix
(see \cite{Kahane81} and \cite{Kahane85} for various properties and applications of quasi-helices) since for every $s,t\in [0,T]$, we have
\begin{equation}\label{var increment}
    2^{-\beta }|t-s|^{2\alpha \beta }\leq\E\left(B^{\alpha ,\beta }(t)-B^{\alpha ,\beta }(s)\right)^2\leq 2^{1-\beta }|t-s|^{2\alpha \beta }.
  \end{equation}
Based on the fractional Brownian motion structure, Houdr\'e and Villa \cite{Villa} have constructed the bifractional Brownian motion as a more general self-similar Gaussian process. Russo and Tudor \cite{RussoTudor} have shown that the bBm behaves like a fractional Brownian motion with Hurst parameter $\alpha \beta$.  There is a rich literature investigating the properties of the bifractional Brownian motion, we refer for example to the following non-exhaustive list: Bojdecki et al. \cite{BojdeckiGorostizaTalarczyk}, El-Nouty \cite{Nouty}, El-Nouty and Journ\'e \cite{NoutyJourne}, Kruk et al. \cite{KrukRussoTudor}, Es-Sebaiy and Tudor \cite{SebaiyTudor}, Tudor and Xiao \cite{TudorXiao} and Lei and Nualart \cite{LeiNualart}, just to mention a few. It was shown essentially in this last paper the following decomposition of the bBm
$$\{C_2B^{\alpha \beta }(t),\;t\geq 0\}\overset{d}{=} \{C_1X^{\alpha ,\beta }(t)+B^{\alpha ,\beta }(t),\; t\geq 0\},$$
where $C_1$, $C_2$ are two constants and $(B^{\alpha \beta }(t))_{t\geq 0}$ is a fractional Brownian motion (fBm) with parameter $\alpha \beta $ and $(X^{\alpha ,\beta }(t))_{t\geq 0}$ is a Gaussian process with infinitely differentiable trajectories on $(0, +\infty)$ and absolutely continuous on $[0, +\infty)$. On the other hand, we know from Ciesielski et al. \cite{Roynette} that almost all paths of the fBm $(B^{\alpha \beta }(t))_{t\geq 0}$ belong (resp. do not belong) to the Besov space $\mathbf{Bes}(\alpha \beta ,p)$
(resp. to the separable subspace $\mathbf{bes}(\alpha \beta,p)$) (definitions are given in Section 2). Needless to mention that, if we take $0<a<b$ one can deduce directly  by Lei and Nualart decomposition that the sample paths of $(B^{\alpha ,\beta }(t))_{a\leq t\leq b}$ satisfy the same Besov regularity as  those of fractional Brownian motion of parameter $\alpha \beta$. Otherwise, we are unable to get the H\"{o}lder regularity of  $X^{\alpha ,\beta }$ on intervals of type $[0, \varepsilon] $, for $\varepsilon>0$, since the trajectories of this process are only  absolutely continuous near $0$. Hence we can not derive directly from Lei and Nualart decomposition the Besov regularity for the bBm on the interval $[0,\varepsilon]$. Our main purpose in this paper is to investigate the Besov regularity for sample paths of  the bBm
$(B^{\alpha ,\beta }(t)) $ for $t\in[0, 1]$.

Besov spaces $ \mathbf{Bes}(\gamma, p)$ are a general framework to investigate the modulus of smoothness  in $L^p$-norms for trajectories of continuous time stochastic processes. In our paper we are concerned by a particular class of Besov spaces of real functions $(f(t)\,,\,\,\, t\in [0,1])$ (for a more general context we can see Triebel \cite{Triebel}).
We note by $\mathcal{C}^{\gamma}$ the space of functions satisfying a H\"{o}lder condition of order $\gamma>0$ endowed with the usual norm. It's known that
Besov spaces cover the H\"{o}lder spaces as particular cases, more precisely $\mathcal{C}^{\gamma}=\mathbf{Bes}(\gamma,\infty)$. And, for $p$ large enough,
we have the following
continuous injections (see Section 2):
$$\mathcal{C}^{\gamma +\frac{1}{p}}\hookrightarrow
\mathbf{bes}(\gamma , p) \hookrightarrow \mathbf{Bes}(\gamma, p) \hookrightarrow \mathcal{C}^{\gamma -\frac{1}{p}}, $$
where $ \mathbf{bes}(\gamma, p)  $ is a separable subspace of the Besov space
$\mathbf{Bes}(\gamma, p)$.
It is well known that almost surly the sample paths of the bBm $(B^{\alpha, \beta }(t))_{t\geq 0}$ belong to the H\"{o}lder spaces $\mathcal{C}^{\gamma}$ for $\gamma < \alpha \beta $, and do not belong a.s. to $\mathcal{C}^{\alpha \beta }$. Our aim is to improve these classical results by showing that we can get smoothness of order $\alpha \beta $ in the Besov spaces $\mathbf{Bes}(\alpha \beta ,p)$ for $\frac{1}{\alpha \beta }<p<\infty $. This is the best regularity one can get in the context of Besov spaces, because we  also prove that almost surly the trajectories of the bBm do not belong to the separable spaces $\mathbf{bes}(\alpha \beta, p)$ for $\frac{1}{\alpha \beta }<p<\infty$. So
the above injections explain clearly the sharpness of our results. Note that our paper leads to some previous Besov regularity results: For $\beta=1 $ we recover the fBm situation considered in \cite{Roynette}; and the case $\alpha=\beta=\frac{1}{2} $ corresponds to the regularity of the mild solution for a linear stochastic heat equation driven by a white noise (see \cite{bounachit}).

Among It\^{o}'s accomplishments, there is the  It\^{o}-Nisio theorem (cf. \cite{Ito}), in which the authors have established on one hand a general improvement of the Fourier series decomposition of the Brownian motion, and on the other hand  a generalization of Wiener's construction of the Brownian motion. They have given the expansion as the convergence of normalized sums of independent random variables. Later, Kerkyacharian and Roynette \cite{KerkyacharianRoynette} have proved the same result of the It\^{o}-Nisio in H\"{o}lder spaces with a sample proof. In this paper we show the It\^{o}-Nisio theorem for the bBm with $\alpha \beta >\frac{1}{2}$ in the H\"{o}lder spaces $\mathcal{C}^{\gamma}$, with $\gamma<\alpha \beta$. The case $\beta=1$ corresponds to the fBm with Hurst parameter $\alpha>1/2$.


This paper is organized as follows. In the second paragraph we give a brief introduction to  Besov spaces. The third paragraph is devoted to study the Besov regularity for the sample paths of the bifractional Brownian motion. In the fourth paragraph we investigate the It\^{o}-Nisio theorem for bBm with $\alpha \beta >\frac{1}{2}.$
The proofs of our results use technical and very fine calculations based on dyadic coordinate expansions of the
bifractional Brownian motion and descriptions of the Besov norms in terms of the corresponding
expansion coefficients of a function.
\section{Preliminaries}
\subsection{Besov spaces}
Let $I\subset\R$ be a compact interval , $1\leq p<\infty$ and $f\in L^p(I\,;\,\R)$. We define for any $t>0$
$$\Delta_{p}(f,I)(t)=\sup_{|s|\leq t}\left\{\int_{I_s}|f(x+s)-f(x)|^pdx\right\}^{\frac{1}{p}},$$
where $I_s=\{x\in I;\;\;x+s\in I\}$. $\Delta_{p}(f,I)(t)$ denotes the modulus of continuity of $f$ in the $L^p$-norm. For $\gamma>0$, we consider the norm
$$||f||_{\gamma,p}:=||f||_{L^p(I)}+\sup_{0< t\leq 1}\frac{\Delta_{p}(f,I)(t)}{t^{\gamma}}.$$
The Besov space is given by
$\mathbf{Bes}(\gamma,p)(I)=\{f\in L^p(I);\;\; ||f||_{\gamma,p}<\infty\}.$
The space $(\mathbf{Bes}(\gamma,p)(I), ||.||_{\gamma,p})$ is a non separable Banach space. We also define
$\mathbf{bes}(\gamma, p)(I)=\{f\in L^p(I);\;\; \Delta_{p}(f, I)(t)=o(t^{\gamma})\;\text{as}\;t\rightarrow 0^+\} $
a separable subspace of $\mathbf{Bes}(\gamma,p)(I)$.
For $p=\infty$, the space $\mathbf{Bes}(\gamma,\infty)(I) $ is defined in the same way by using the usual $L^\infty$-norm.

In the case of unit interval $I=[0,1] $ Besov spaces are characterized in terms of sequences of the coefficients of the expansion of continuous functions with respect to the Schauder basis. The following isomorphism theorem has been established by Ciesielski et al. \cite{Roynette}
\begin{thm}\label{Besov}
Let $1<p<\infty$ and $\frac{1}{p}<\gamma<1$, we have
\begin{enumerate}
  \item  $\mathbf{Bes}(\gamma,p)([0,1])$ is linearly isomorphic to a sequences space and we have the following equivalence of norms:
      $$||f||_{\gamma,p}\sim\sup\left\{|f_0|,|f_1|,\sup_{j}2^{-j\left(\tfrac{1}{2}-\gamma+\tfrac{1}{p}\right)}\left[\sum_{k=1}^{2^{j}}|f_{jk}|^p\right]^{\tfrac{1}{p}} \right\},$$
where the coefficients $\left\{f_0,  f_1, f_{jk}\,, j\geq0\,, 1\leq k\leq 2^{j}\right\}$ are given by
$$f_0=f(0),\qquad f_1=f(1)-f(0),$$
$$f_{jk}=2\cdot 2^{j/2}\left\{f\left(\frac{2k-1}{2^{j+1}}\right)-\frac{1}{2}f\left(\frac{2k}{2^{j+1}}\right)-\frac{1}{2}f\left(\frac{2k-2}{2^{j+1}}\right)\right\}.$$
  \item $f$ is in $\mathbf{bes}(\gamma, p)([0,1])$ if and only if
  $$\lim_{j\to 0}2^{-j\left(\tfrac{1}{2}-\gamma+\tfrac{1}{p}\right)}\left[\sum_{k=1}^{2^j}|f_{jk}|^p\right]^{\tfrac{1}{p}}=0.$$
\end{enumerate}
\end{thm}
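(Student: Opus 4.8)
The plan is to realize the isomorphism concretely through the Faber--Schauder system, whose coefficients are exactly the second differences appearing in the statement; throughout, $C$ denotes a constant that may change from line to line. First I would recall that every continuous $f$ on $[0,1]$ admits the uniformly convergent expansion $f=f_0 e_0+f_1 e_1+\sum_{j\ge 0}\sum_{k=1}^{2^j} f_{jk}\,e_{jk}$, where $e_0,e_1$ are the two affine functions and $e_{jk}$ is the tent function supported on $[(k-1)2^{-j},k2^{-j}]$, normalized so that each coefficient is recovered by the displayed formula. The relevant features are that $\|e_{jk}\|_\infty\sim 2^{-j/2}$, that the supports $\{e_{jk}\}_k$ are disjoint at each fixed level $j$, and hence that $\|e_{jk}\|_p\sim 2^{-j(1/2+1/p)}$ while $\|e_{jk}(\cdot+s)-e_{jk}\|_p\le C\min\!\big(|s|\,2^{j(1/2-1/p)},\,2^{-j(1/2+1/p)}\big)$. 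The hypothesis $1/p<\gamma$ guarantees the embedding $\mathbf{Bes}(\gamma,p)([0,1])\subset C([0,1])$, so that the point evaluations defining $f_{jk}$ make sense.

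For the implication that the sequence norm controls the Besov norm, write $A$ for the supremum on the right-hand side and fix a shift $t$ with $2^{-J}\le|t|<2^{-J+1}$. Estimating $\Delta_p(f,[0,1])(t)$ by the triangle inequality over the levels $j$ and using the disjoint supports of $\{e_{jk}\}_k$, the level-$j$ contribution is at most $m_j(t)\,(\sum_k|f_{jk}|^p)^{1/p}\le C\,m_j(t)\,A\,2^{j(1/2-\gamma+1/p)}$, where $m_j(t):=\|e_{j1}(\cdot+t)-e_{j1}\|_p$. Using $m_j(t)\le C|t|\,2^{j(1/2-1/p)}$ for $j\le J$ and $m_j(t)\le C\,2^{-j(1/2+1/p)}$ for $j>J$, both resulting geometric series converge precisely because $0<\gamma<1$, and each sums to at most $C\,A\,|t|^\gamma$. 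This yields $\sup_t\Delta_p(f,[0,1])(t)/t^\gamma\le C A$, and an analogous simpler estimate controls $\|f\|_{L^p}$, giving $\|f\|_{\gamma,p}\le C A$.

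For the converse I would exploit that $f_{jk}=-2^{j/2}\,\Delta^2_h f(t_{jk})$ is $2^{j/2}$ times a second difference of step $h=2^{-(j+1)}$ centred at the dyadic midpoint $t_{jk}$, these midpoints being $2^{-j}$-separated and tiling $[0,1]$. The goal is $(\sum_k|f_{jk}|^p)^{1/p}\le C\,2^{j(1/2+1/p)}\Delta_p(f,[0,1])(2^{-j})$, which after dividing by $2^{j(1/2-\gamma+1/p)}$ reduces to $2^{j\gamma}\Delta_p(f,[0,1])(2^{-j})\le\|f\|_{\gamma,p}$, true by the very definition of the norm. The passage from the $L^p$ modulus, an integral in the base point, to the discrete $\ell^p$ sum over the fixed dyadic grid is carried out by an averaging argument: integrating $|\Delta^2_h f|^p$ over translates of the grid realizes $\sum_k|\Delta^2_h f(t_{jk})|^p$ as a multiple of $\int_0^1|\Delta^2_h f(x)|^p\,dx$, which is in turn bounded by $\Delta_p(f,[0,1])(h)^p$.

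\textbf{Main obstacle.} I expect the converse direction to be the crux: controlling the $\ell^p$ norm of the second-difference coefficients at the fixed dyadic nodes by the $L^p$ modulus of continuity needs the translation-averaging step together with the second-order difference structure, since a single first difference at fixed nodes cannot be dominated by an integral modulus, and it is here that the restriction $1<p<\infty$ and the exact exponents must be tracked. For part~2, the same two estimates, applied with $\varepsilon$-tails over the frequency levels, upgrade the two-sided norm equivalence to the assertion that $\Delta_p(f,[0,1])(t)=o(t^\gamma)$ holds if and only if $2^{-j(1/2-\gamma+1/p)}(\sum_k|f_{jk}|^p)^{1/p}\to 0$.
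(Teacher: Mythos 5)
First, a point of comparison: the paper does not prove this statement at all — it is quoted verbatim from Ciesielski, Kerkyacharian and Roynette (Studia Math.\ 1993, the paper's reference \cite{Roynette}) — so there is no in‑paper proof to measure you against, and your attempt must be judged on its own. Your synthesis direction (sequence norm controls $\|f\|_{\gamma,p}$) is sound and is the standard argument: the Schauder expansion, the bounds $\|e_{jk}\|_p\sim 2^{-j(1/2+1/p)}$ and $\|e_{jk}(\cdot+s)-e_{jk}\|_p\le C\min\bigl(|s|2^{j(1/2-1/p)},2^{-j(1/2+1/p)}\bigr)$, the bounded overlap of supports at a fixed level, and the two geometric series split at $2^{-J}\sim|t|$, convergent precisely because $0<\gamma<1$.

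The converse direction, which you rightly flag as the crux, has a genuine gap. The single-scale inequality $\bigl(\sum_k|f_{jk}|^p\bigr)^{1/p}\le C\,2^{j(1/2+1/p)}\Delta_p(f,[0,1])(2^{-j})$ that your reduction rests on is false in general: the left side is built from point evaluations of $f$ at the fixed dyadic nodes, and the $L^p$ modulus at the single scale $2^{-j}$ cannot see point values (a continuous function with spikes of height $1$ and width $\delta\ll 2^{-j}$ at the midpoints $t_{jk}$ has $\bigl(\sum_k|f_{jk}|^p\bigr)^{1/p}\sim 2^{j(1/2+1/p)}$ while $\Delta_p(f)(2^{-j})\le C(2^j\delta)^{1/p}$ is arbitrarily small). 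Your proposed repair — integrating over translates of the grid — only shows that the \emph{average} over translates $\tau$ of $\sum_k|\Delta^2_hf(t_{jk}+\tau)|^p$ equals a multiple of $\int_0^1|\Delta^2_hf|^p\,dx$; it gives no control of the value at the particular translate $\tau=0$. The correct argument replaces each point value by a local mean, $\Delta^2_hf(t_{jk})=\frac1a\int_0^a\Delta^2_hf(t_{jk}+s)\,ds+\text{error}$, and telescopes the error over dyadic sub-scales $a=h2^{-m}$; each step is bounded, after summing the $p$-th powers over the disjoint intervals $[t_{jk},t_{jk}+a]$, by $a^{-1/p}\Delta_p(f)(a)\le a^{\gamma-1/p}\|f\|_{\gamma,p}$, and the resulting series $\sum_m(h2^{-m})^{\gamma-1/p}$ converges exactly because $\gamma>1/p$. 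The telltale sign of the gap is that your converse never uses $\gamma>1/p$ quantitatively (only qualitatively, to make point evaluations meaningful), whereas the inequality genuinely requires modulus information from \emph{all} scales finer than $2^{-j}$, summed under that hypothesis. Once this step is repaired, your $\varepsilon$-tail argument for part~2 goes through.
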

\begin{rem}
  \begin{enumerate}
    \item Let $1\leq p<\infty$ and $0<\gamma<\gamma'<1$, then we have
    $$\mathbf{Bes}(\gamma',p)(I)\hookrightarrow \mathbf{bes}(\gamma, p)(I).$$
    \item We denote by $\mathcal{C}^{\gamma}(I)$ the H\"{o}lder space define by
    \begin{equation}\label{C omega}
      \mathcal{C}^{\gamma}(I):=\left\{f\in C(I),\;\; \sup_{x,y\in I \atop x\neq y }\frac{|f(x)-f(y)|}{|x-y|^{\gamma}}<\infty\right\},
    \end{equation}
    endowed with the norm
    $||f||_{\gamma}=\sup_{x\in I}|f(x)|+\sup_{x,y\in I \atop x\neq y }\frac{|f(x)-f(y)|}{|x-y|^{\gamma}}.$
    \begin{itemize}
      \item $\mathbf{Bes}(\gamma,\infty)(I)=\mathcal{C}^{\gamma}(I).$
      \item For $ 1\leq p<\infty$, we have $\mathcal{C}^{\gamma}(I)\hookrightarrow \mathbf{Bes}(\gamma,p)(I).$
      \item For $ \frac{1}{p}<\gamma<1$, we obtain $$\mathcal{C}^{\gamma+\frac{1}{p}}(I)\hookrightarrow \mathbf{bes}(\gamma, p)(I) \hookrightarrow \mathbf{Bes}(\gamma,p)(I) \hookrightarrow \mathcal{C}^{\gamma-\frac{1}{p}}(I).$$
    \end{itemize}
  \end{enumerate}
\end{rem}
In the next, we will
restrict ourselvse to the interval $I=[0,1]$, so we will omit to precise the interval $I$ in our notations, e.g.
$\mathbf{Bes}(\gamma,p):=\mathbf{Bes}(\gamma,p)(I) $.
\section{Besov regularity of the bifractional Brownian motion}
Our main result is the following theorem
\begin{thm}\label{pricipal BHK}
For each $\alpha \in (0,1)$, $\beta\in (0,1]$ and $\frac{1}{\alpha \beta}<p<\infty$, we have
$$\mathbb{P}(B^{\alpha ,\beta}(.)\in \mathbf{Bes}(\alpha \beta,p) )=1\;\;\text{and}\;\;\mathbb{P}(B^{\alpha ,\beta}(.)\in \mathbf{bes}(\alpha \beta, p) )=0,$$
where $B^{\alpha ,\beta}(.)$ are the sample paths $ t\in [0,1]\to B^{\alpha ,\beta}(t) $.
\end{thm}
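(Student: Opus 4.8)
The plan is to feed the random function $f=B^{\alpha,\beta}$ into the isomorphism of Theorem~\ref{Besov}, which turns both assertions into statements about its Schauder coefficients. Put $H=\alpha\beta$, $h=2^{-(j+1)}$ and $t_{j,k}=(k-1)2^{-j}$; then the coefficient $f_{jk}$ of the bBm is the rescaled second difference
\[
B_{jk}=-2^{j/2}\bigl(B^{\alpha,\beta}(t_{j,k}+2h)-2B^{\alpha,\beta}(t_{j,k}+h)+B^{\alpha,\beta}(t_{j,k})\bigr),
\]
a centred Gaussian variable, while $f_0=0$ and $f_1=B^{\alpha,\beta}(1)$ are harmless. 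Writing
\[
Y_j=2^{-j\left(\frac12-H+\frac1p\right)}\Bigl(\sum_{k=1}^{2^j}|B_{jk}|^p\Bigr)^{1/p},
\]
Theorem~\ref{Besov} reduces the first claim to $\sup_j Y_j<\infty$ a.s.\ and, since $f\in\mathbf{bes}(H,p)$ iff $Y_j\to0$, the second claim to $Y_j\not\to0$ a.s.

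For membership in $\mathbf{Bes}(H,p)$ I would first record the uniform variance bound $\E[B_{jk}^2]\le C\,2^{j(1-2H)}$, which is immediate from the quasi-helix inequality~\eqref{var increment} applied to the two consecutive increments forming the second difference; together with $\E|B_{jk}|^p=c_p(\E B_{jk}^2)^{p/2}$ this gives $\sup_j\E[Y_j^p]<\infty$, hence $\sup_j\E[Y_j]<\infty$ by Jensen. To upgrade this to an almost sure bound on the supremum I would read $(\sum_k|B_{jk}|^p)^{1/p}$ as the $\ell^p$-norm of the Gaussian vector $(B_{jk})_{k}$ and apply the Gaussian concentration inequality for Lipschitz functionals. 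Its Lipschitz constant is governed by the operator norm of the normalised covariance matrix $\Sigma_j=\bigl(2^{-j(1-2H)}\cov(B_{jk},B_{jk'})\bigr)_{k,k'}$ (with an extra dimensional factor $2^{j(1/p-1/2)}$ when $p<2$). The decisive input is a uniform bound $\|\Sigma_j\|_{\mathrm{op}}\le C$, which I would get from the Schur test out of a decay estimate of the form
\[
|\cov(B_{jk},B_{jk'})|\le C\,2^{j(1-2H)}\,(1+|k-k'|)^{2H-4};
\]
since $2H-4<-1$ the row sums are bounded uniformly in $j,k$. Feeding this into Gaussian concentration yields $\mathbb{P}(Y_j>c)\le\exp(-c'2^{\min(2/p,\,1)\,j})$ once $c>\sup_j\E[Y_j]$, a summable bound, so Borel--Cantelli gives $\sup_j Y_j<\infty$ a.s.\ and the first conclusion follows from Theorem~\ref{Besov}(1).

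For non-membership in $\mathbf{bes}(H,p)$ I would establish the matching lower bound $\E[B_{jk}^2]\ge c\,2^{j(1-2H)}$ for the indices with $t_{j,k}\in[\tfrac12,1)$, i.e.\ on a set of at least $2^{j-1}$ coefficients; restricting to this range is convenient because there the ``extra'' part of the covariance is smooth and the lower bound is carried by the translation-invariant part of~\eqref{R}. This gives $\E[Y_j^p]\ge c'>0$ uniformly. Combined with the uniform second-moment bound $\E[(Y_j^p)^2]\le C$ (Cauchy--Schwarz together with the variance estimate), the Gaussian concentration used above also controls the lower tail and shows $\mathbb{P}(Y_j<c''/2)$ is summable for a suitable $c''>0$; Borel--Cantelli then gives $Y_j\ge c''/2$ for all large $j$ almost surely, so $Y_j\not\to0$ a.s.\ and Theorem~\ref{Besov}(2) yields $\mathbb{P}(B^{\alpha,\beta}\in\mathbf{bes}(H,p))=0$. (Alternatively one may conclude from Paley--Zygmund that $\mathbb{P}(Y_j\ge c'/2)\ge\theta>0$ for large $j$ and invoke the Gaussian zero--one law.)

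The main obstacle is the covariance analysis underlying both halves, namely controlling the fourth-order mixed difference of the kernel~\eqref{R}. It splits into the fourth difference of $-2^{-\beta}|t-s|^{2H}$, which is translation invariant and produces the clean $(1+|k-k'|)^{2H-4}$ decay exactly as for the fractional Brownian motion, plus the fourth difference of the genuinely two-variable term $2^{-\beta}(s^{2\alpha}+t^{2\alpha})^\beta$ (which is separable, hence vanishes, only when $\beta=1$). Away from the origin the latter is smooth and contributes only $O(h^4)$, negligible against $2^{j(1-2H)}$; the delicate point is the coefficients $B_{jk}$ with $t_{j,k}$ near $0$, where this term is singular (this is precisely the region where the Lei--Nualart component is merely absolutely continuous). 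Handling it uniformly in $j$ --- showing it neither destroys the summability of the row sums in the upper bound nor the lower variance bound --- is where the self-similarity~\eqref{self similar} of the bBm, which rescales the near-origin range onto an order-one range, would do the essential work.
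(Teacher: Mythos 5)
Your skeleton is the paper's skeleton: feed the path into the isomorphism of Theorem~\ref{Besov}, reduce both claims to the behaviour of $Y_j=2^{-j(\frac12-\alpha\beta+\frac1p)}(\sum_k|u_{jk}|^p)^{1/p}$, and control it through the fourth-order mixed difference of \eqref{R} split into the stationary part $|t-s|^{2\alpha\beta}$ and the two-variable part $(s^{2\alpha}+t^{2\alpha})^{\beta}$. Two of your deviations are sound and even economical: the upper variance bound $\E[u_{jk}^2]\leq C2^{j(1-2\alpha\beta)}$ does follow from \eqref{var increment} alone, and your lower bound restricted to $k\geq 2^{j-1}$ (where the mixed fourth derivative of $\Psi_{k,k}$ is $O(2^{j(2\alpha\beta-4)})$, negligible against the constant $8-2^{2\alpha\beta+1}>0$ produced by the stationary part) legitimately replaces the paper's appeal to local non-determinism, since half of the coefficients suffice.

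The gap is the concentration step. Gaussian Lipschitz concentration requires the uniform operator-norm bound on $\Sigma_j$, which you propose to extract by the Schur test from $|\cov(u_{jk},u_{jk'})|\leq C2^{j(1-2\alpha\beta)}(1+|k-k'|)^{2\alpha\beta-4}$. That decay estimate is false for the bBm whenever $\beta<1$ and $\alpha\neq\frac12$: by the mean value theorem the contribution of $\Psi_{k,k'}$ contains the term $I_1\asymp X^{2\alpha-2}Y^{2\alpha-2}(X^{2\alpha}+Y^{2\alpha})^{\beta-2}$ with $X\asymp k$, $Y\asymp k'$, so for fixed small $k'$ and large $k$ it is of order $k^{2\alpha\beta-2\alpha-2}$, exceeding your claimed bound $k^{2\alpha\beta-4}$ by a factor $k^{2-2\alpha}\to\infty$. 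The decay you would actually need is a genuine two-variable estimate (roughly $(k')^{2\alpha-2}k^{2\alpha(\beta-1)-2}$ off the diagonal), and neither you nor the paper's Lemma~\ref{lem estimation Eujkujkprime} proves it: the first term of \eqref{estmation Eujkujkprime} does not decay in $k$ at all and is uncontrolled for $k'=1$, where $2k'-2+\kappa_{k,k'}=\kappa_{k,k'}\in(0,2)$ is an arbitrary mean-value point. The paper avoids the operator norm altogether: it only needs the Hilbert--Schmidt-type bound $\sum_{k,k'}|\E v_{jk}v_{jk'}|^2\leq M2^j$ (Lemma~\ref{vjkvjk rimep 111}), which tolerates the $O(2^j)$ problematic pairs ($k'=1$ or $|k-k'|\leq1$) being bounded trivially by $1$; this is converted into $\var\bigl(\sum_k|v_{jk}|^p\bigr)\leq(c_{2p}-c_p^2)M2^j$ via the correlation inequality of Lemma~\ref{cauchy gaussian} (the evenness of $f(x)=|x|^p-c_p$ supplying the crucial $\rho^2$), and Chebyshev plus Borel--Cantelli then give $2^{-j}\sum_k|v_{jk}|^p\to c_p$ a.s., which settles both assertions at once. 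So either prove the sharper two-variable covariance decay to salvage the Schur test, or replace your concentration mechanism by this second-moment argument; as written, the step ``$\|\Sigma_j\|_{\mathrm{op}}\leq C$'' is exactly the unproved point you yourself flag as the main obstacle.
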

\noindent To show this theorem, we will adapt the techniques in \cite{Roynette}. Let us first give some preliminary results.

The below lemma is a useful tool to obtain precise estimations in the calculations of this paper. For the proof we refer to \cite{Roynette}.
\begin{lem}\label{cauchy gaussian}
  Let $(X,Y)$ be a mean zero Gaussian vector such that $\E(X^2)=\E(Y^2)=1$ and $\rho=|\E XY|$. So for any measurable functions $f$ and $g$ such that
  $\E(f(X))^2<\infty,\E(f(Y))^2<\infty$ and $f(X)$, $f(Y)$ are centred, we have
  $$|\E f(X)g(Y)|\leq \rho \left\{\E(f(X))^2\right\}^{1/2}\left\{\E(f(Y))^2\right\}^{1/2},$$
  when $f$ (or $g$) is even we can replace $\rho$ by $\rho^2$ in the previous  inequality.
\end{lem}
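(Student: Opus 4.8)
The plan is to diagonalize the problem in the Hermite basis. Let $(H_n)_{n\ge0}$ be the Hermite polynomials, normalized so that they form an orthonormal basis of $L^2(\R,\mu)$ with $\mu=\mathcal{N}(0,1)$; recall that $H_0\equiv1$ and that $H_n$ has the same parity as $n$. Since $X$ and $Y$ are each standard Gaussian with $\E f(X)^2<\infty$ and $\E g(Y)^2<\infty$, I would expand $f=\sum_{n\ge0}a_nH_n$ and $g=\sum_{n\ge0}b_nH_n$ in $L^2(\mu)$, where $a_n=\E[f(X)H_n(X)]$ and $b_n=\E[g(Y)H_n(Y)]$, so that Parseval gives $\sum_n a_n^2=\E f(X)^2$ and $\sum_n b_n^2=\E g(Y)^2$. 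Writing $r=\E XY$, so that $\rho=|r|$ and $|r|\le1$ by Cauchy--Schwarz applied to the pair $(X,Y)$, the whole statement reduces to computing $\E[f(X)g(Y)]$ coefficient by coefficient.

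The heart of the argument, and the step I expect to require the most care, is the bivariate Hermite orthogonality relation
$$\E\bigl[H_n(X)H_m(Y)\bigr]=r^{\,n}\,\delta_{nm},\qquad n,m\ge0.$$
I would prove it by comparing the two--variable generating function $\sum_{n,m}\frac{s^nt^m}{\sqrt{n!\,m!}}\,\E[H_n(X)H_m(Y)]=\E\bigl[e^{sX-s^2/2}\,e^{tY-t^2/2}\bigr]$ with Mehler's formula: evaluating the Gaussian expectation on the right through $\E[e^{sX+tY}]=\exp\!\bigl(\tfrac12\var(sX+tY)\bigr)$ with $\cov(X,Y)=r$ collapses it to $e^{str}$, whose power series $\sum_k\frac{r^k}{k!}s^kt^k$ has coefficients $r^k/k!$ exactly on the diagonal and zero off it, yielding the identity after matching coefficients of $s^nt^m$. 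Granting this, I justify passing to the limit via truncations: with $f_N=\sum_{n\le N}a_nH_n$ and $g_N=\sum_{m\le N}b_mH_m$ one has $\E[f_N(X)g_N(Y)]=\sum_{n\le N}a_nb_n r^n$, and since $f_N(X)\to f(X)$, $g_N(Y)\to g(Y)$ in $L^2$ the left side converges to $\E[f(X)g(Y)]$, while the right side converges absolutely because $\sum_n|a_n||b_n|<\infty$ by Cauchy--Schwarz. This gives
$$\E\bigl[f(X)g(Y)\bigr]=\sum_{n\ge0}a_nb_n\,r^{\,n}.$$

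It then remains to read off the two bounds. Because $f(X)$ is centred we have $a_0=\E f(X)=0$, so the series starts at $n=1$; using $|r|^n\le|r|=\rho$ for every $n\ge1$ (valid since $\rho\le1$), then Cauchy--Schwarz and Parseval,
$$\bigl|\E f(X)g(Y)\bigr|\le\sum_{n\ge1}|a_n||b_n|\,|r|^n\le\rho\sum_{n\ge1}|a_n||b_n|\le\rho\Bigl(\sum_{n\ge1}a_n^2\Bigr)^{1/2}\Bigl(\sum_{n\ge1}b_n^2\Bigr)^{1/2}=\rho\,\{\E f(X)^2\}^{1/2}\{\E g(Y)^2\}^{1/2},$$
which is the first inequality (note that only the centring of $f(X)$, i.e. $a_0=0$, is actually used to kill the $n=0$ term). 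For the refinement, if $f$ is even then the parity of the $H_n$ forces $a_n=0$ for all odd $n$; together with $a_0=0$ this leaves only even indices $n\ge2$, for which $|r|^n\le\rho^2$, and the identical chain of estimates produces the factor $\rho^2$ in place of $\rho$. The case where $g$ is even is symmetric, since $b_n=0$ for odd $n$ again restricts the surviving terms to even $n\ge2$.
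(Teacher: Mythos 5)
Your proof is correct and is essentially the paper's own route: the paper gives no proof of this lemma but defers to \cite{Roynette}, where the argument is exactly this Hermite--Mehler expansion $\E[H_n(X)H_m(Y)]=r^n\delta_{nm}$ followed by Cauchy--Schwarz on the coefficients, with parity killing the odd terms in the even case. You also (rightly) read the statement with $g(Y)$ in place of the paper's typographical $f(Y)$, and your observation that only $\E f(X)=0$ is needed to remove the $n=0$ term is accurate.
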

We define
\begin{equation}\label{ujk}
  u_{jk}:=2\cdot 2^{j/2}\left\{B^{\alpha ,\beta}\left(\frac{2k-1}{2^{j+1}}\right)-\frac{1}{2}B^{\alpha ,\beta}\left(\frac{2k}{2^{j+1}}\right)-\frac{1}{2}B^{\alpha ,\beta}\left(\frac{2k-2}{2^{j+1}}\right)\right\}.
\end{equation}
We set
\begin{equation}\label{vjk}
  v_{jk}=\frac{u_{jk}}{\sigma_{jk}}\qquad\text{with}\qquad \sigma_{jk}=\left\{\E[|u_{jk}|^2]\right\}^{1/2}.
\end{equation}
By using \eqref{R} and \eqref{ujk}, we have for all $j\geq 1$ and $k,k'\in \{1,...,2^j\}$
\begin{equation}\label{ujkujkprime}
  \E[u_{jk}u_{jk'}]=\frac{2^{j(1-2\alpha \beta)}}{2^{\beta+2\alpha \beta}}(\Delta^2_y\Delta^2_x\Psi_{k,k'}(0,0)-\Delta^4\Phi_{k,k'}(0)),
\end{equation}
where $\Delta^2_y$ (resp. $\Delta^2_x$) is the one step progressive difference of order $2$ in the $y$ variable (resp. $x$ variale), and   $\Delta^4$  is the one step progressive difference of order $4$. The functions in \eqref{ujkujkprime} are
$$\Psi_{k,k'}(x,y)=\left((2k-2+x)^{2\alpha }+(2k'-2+y)^{2\alpha }\right)^\beta,$$
and
$$\Phi_{k,k'}(x)=\left|2(k-k')-2+x\right|^{2\alpha \beta}.$$
\begin{lem}\label{lem estimation Eujkujkprime}
  For all $\alpha \in (0,1)$, $\beta\in (0,1]$, $j\geq 1$ and $k,k'\in \{1,...,2^j\}$ with $k'<k$, there exist $C>0$, $\kappa_{k,k'}\in (0,2)$ and $c_{k,k'}\in (0,4)$ such that
  \begin{equation}\label{estmation Eujkujkprime}
        \begin{split}
           \left|\E[u_{jk}u_{jk'}]\right|\leq C 2^{j(1-2\alpha \beta)}&\left\{\frac{1}{(2k'-2+\kappa_{k,k'})^{4-2\alpha \beta}}\right.\\
             & \left.+\frac{1}{(2(k-k')-2+c_{k,k'})^{4-2\alpha \beta}}\right\}.
        \end{split}
  \end{equation}
  And there exist two constants $m_1,m_2>0$ such that, for all $j\geq 1$ and $k\in \{1,...,2^j\},$
  \begin{equation}\label{estmation Eujk2}
    m_1\, 2^{j(1-2\alpha \beta)} \leq\E[|u_{jk}|^2] \leq m_2\, 2^{j(1-2\alpha \beta)}.
  \end{equation}
\end{lem}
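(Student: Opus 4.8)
The plan is to reduce the probabilistic claim entirely to two deterministic calculus estimates via the explicit covariance identity \eqref{ujkujkprime}: a decay bound for the mixed fourth difference $\Delta^2_y\Delta^2_x\Psi_{k,k'}(0,0)$ and for the fourth difference $\Delta^4\Phi_{k,k'}(0)$. The main tool producing the free parameters $\kappa_{k,k'}$ and $c_{k,k'}$ is the Peano-kernel representation of a forward finite difference: for a smooth one-variable $g$ one has $\Delta^2 g(0)=\int_{[0,1]^2}g''(s_1+s_2)\,ds_1\,ds_2$ and $\Delta^4 g(0)=\int_{[0,1]^4}g^{(4)}(s_1+\cdots+s_4)\,ds$, and by Fubini $\Delta^2_y\Delta^2_x\Psi(0,0)=\int_{[0,1]^4}\partial^2_x\partial^2_y\Psi(s_1+s_2,t_1+t_2)\,ds\,dt$. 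By continuity each such integral equals its integrand at an interior point, which is exactly where the shifts $\kappa_{k,k'}\in(0,2)$ and $c_{k,k'}\in(0,4)$ come from.

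It then remains to bound the integrands. For $\Phi_{k,k'}(x)=|2(k-k')-2+x|^{2\alpha\beta}$ with $k'<k$ the singularity sits at $x\le 0$, so on $(0,4)$ the function is a smooth power and $\Phi^{(4)}(x)=C\,(2(k-k')-2+x)^{2\alpha\beta-4}$, which after the MVT gives the second term of \eqref{estmation Eujkujkprime}. For $\Psi_{k,k'}(x,y)=(u^{2\alpha}+v^{2\alpha})^\beta$ with $u=2k-2+x$ and $v=2k'-2+y$, the key observation is that $\Psi$ is homogeneous of degree $2\alpha\beta$, hence $\partial^2_x\partial^2_y\Psi=\partial^2_u\partial^2_v\Psi$ is homogeneous of degree $2\alpha\beta-4$; writing it as $v^{2\alpha\beta-4}F(u/v)$ and checking that $F$ is continuous on $[1,\infty)$ with $F(r)\sim r^{2\alpha\beta-2\alpha-2}\to 0$ as $r\to\infty$ (here $k>k'$ forces $u\ge v$, so the ratio stays $\ge 1$), one obtains $|\partial^2_u\partial^2_v\Psi|\le \|F\|_\infty\,v^{2\alpha\beta-4}=\|F\|_\infty\,(2k'-2+y)^{2\alpha\beta-4}$ with a universal constant. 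Integrating this bound over the $s$-variables (measure $1$) and applying the MVT in the $t$-variables yields the first term of \eqref{estmation Eujkujkprime}.

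The delicate part, and the one I expect to be the main obstacle, is the boundary case where the relevant shifted index is minimal, namely $k'=1$ for $\Psi$ (so $v$ ranges down to $0$) and $k-k'=1$ for $\Phi$ (so the singularity sits at $x=0$). There $\partial^2_v\Psi\sim v^{2\alpha-2}$ and $\Phi^{(4)}\sim x^{2\alpha\beta-4}$ fail to be integrable when $\alpha$ is small, so the clean integral representation above is unavailable and a single uniform $C$ cannot be read off from it directly. I would treat these separately, exploiting that the target bound is very permissive precisely in these cases: since the shifted index $2k'-2+\kappa_{k,k'}$ (resp.\ $2(k-k')-2+c_{k,k'}$) may be taken close to $0$, where $(\cdot)^{2\alpha\beta-4}$ blows up, the mere finiteness of the explicitly computable difference — confirmed by the large-$u$ expansion $\Psi\approx u^{2\alpha\beta}(1+\beta(v/u)^{2\alpha}+\cdots)$, which gives $\Delta^2_y\Delta^2_x\Psi_{k,1}(0,0)=O(k^{2\alpha\beta-2\alpha-2})$, and a crude bound for the finitely many genuinely small indices — lets one select admissible $\kappa_{k,k'},c_{k,k'}$ while keeping the same constant $C$.

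Finally, for the variance bounds \eqref{estmation Eujk2} I would write $u_{jk}=2^{j/2}\big((B^{\alpha,\beta}(\tfrac{2k-1}{2^{j+1}})-B^{\alpha,\beta}(\tfrac{2k-2}{2^{j+1}}))-(B^{\alpha,\beta}(\tfrac{2k}{2^{j+1}})-B^{\alpha,\beta}(\tfrac{2k-1}{2^{j+1}}))\big)$ as a difference of two adjacent increments of length $2^{-(j+1)}$. The upper bound follows immediately from the quasi-helix inequality \eqref{var increment} together with $\E(X-Y)^2\le 2\E X^2+2\E Y^2$, giving $\E|u_{jk}|^2\le 2^{3-\beta-2\alpha\beta}\,2^{j(1-2\alpha\beta)}$. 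For the lower bound I would instead use \eqref{ujkujkprime} with $k=k'$, so that $\E|u_{jk}|^2=2^{j(1-2\alpha\beta)}c_k$ with $c_k=2^{-\beta-2\alpha\beta}\big(\Delta^2_y\Delta^2_x\Psi_{k,k}(0,0)-\Delta^4\Phi_{k,k}(0)\big)$. Here a direct computation gives $-\Delta^4\Phi_{k,k}(0)=8-2^{2\alpha\beta+1}>0$, a positive constant independent of $k$, while the homogeneity bound yields $\Delta^2_y\Delta^2_x\Psi_{k,k}(0,0)=O(k^{2\alpha\beta-4})\to 0$; hence $c_k$ converges to a strictly positive limit. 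Since $\E|u_{jk}|^2>0$ for every $k$ by non-degeneracy of the finite-dimensional distributions of $B^{\alpha,\beta}$, each $c_k>0$, and therefore $\inf_{k\ge 1}c_k>0$, which provides the constant $m_1$.
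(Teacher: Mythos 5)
Your proof is essentially correct and follows the same skeleton as the paper's (reduce everything to the covariance identity \eqref{ujkujkprime}, convert the finite differences into derivatives at interior points, then bound those derivatives), but it differs at almost every technical step, mostly to its advantage. Where the paper writes out $\partial^2_y\partial^2_x\Psi_{k,k'}$ as four explicit terms \eqref{derivative of Psi} and runs a case analysis on $\alpha\lessgtr\tfrac12$, your homogeneity argument ($\partial^2_u\partial^2_v\Psi=v^{2\alpha\beta-4}F(u/v)$ with $F$ bounded on $[1,\infty)$ because $2\alpha\beta-2\alpha-2<0$, and $u>v$ precisely because $k>k'$) gives the same bound in one stroke. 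Your upper variance bound via the quasi-helix inequality \eqref{var increment} and $\E(X-Y)^2\le 2\E X^2+2\E Y^2$ is much shorter than the paper's, which again goes through the derivative formula with a sign analysis of the $I_l$. For the lower bound the paper invokes local non-determinism \eqref{SLND BHK} to get a bound uniform in $k$ directly, whereas you combine $-\Delta^4\Phi_{k,k}(0)=8-2^{2\alpha\beta+1}>0$, the decay $\Delta^2_y\Delta^2_x\Psi_{k,k}(0,0)=O(k^{2\alpha\beta-4})$, and pointwise positivity of each $c_k$; the limit argument is valid, but the statement that $c_k>0$ for every fixed $k$ is exactly a non-degeneracy assertion for the bBm that you should cite rather than assert --- the local non-determinism result the paper quotes from the literature is the natural reference, so you have not actually dispensed with that external input, only weakened what you need from it.

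One remark on the step you flag as "the main obstacle": it is an artifact of your choice of the Peano-kernel representation, whose integrand is indeed non-integrable when $k'=1$ (for $\alpha<\tfrac12$) or $k-k'=1$. Your patch --- observing that in those cases the right-hand side of \eqref{estmation Eujkujkprime} is bounded below by a positive constant times $2^{j(1-2\alpha\beta)}$ while the left-hand side is $2^{j(1-2\alpha\beta)}$ times a quantity uniformly bounded in $k$, so enlarging $C$ suffices --- does work. But the detour is avoidable: the mean value theorem for divided differences only requires the function to be continuous on the closed interval and differentiable the requisite number of times on the open interval, so $\Delta^4\Phi_{k,k-1}(0)=\Phi^{(4)}(c)$ for some $c\in(0,4)$ and likewise for $\Psi_{k,1}$ in the $y$-variable; this is how the paper handles all indices uniformly without comment.
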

\begin{proof}
  Denote by $\Phi_{k,k'}^{(4)}$ the derivative of order $4$ of $\Phi_{k,k'}$. So by the mean value theorem and \eqref{ujkujkprime}, there exist three  constants $c_{1,k,k'},c_{2,k,k'}\in (0,2)$ and $c_{3,k,k'}\in (0,4)$ such that
  \begin{equation}\label{derivative}
    \E[u_{jk}u_{jk'}]=\frac{2^{j(1-2\alpha \beta)}}{2^{\beta+2\alpha \beta}}(\partial^2_y\partial^2_x\Psi_{k,k'}(c_{1,k,k'},c_{2,k,k'})-\Phi_{k,k'}^{(4)}(c_{3,k,k'})),
  \end{equation}
  where $\partial_y=\frac{\partial}{\partial y}$ and $\partial_x=\frac{\partial}{\partial x}.$ On the other hand, we get for all $j\geq 1$ and $k,k'\in \{1,...,2^j\},$
  \begin{equation}\label{derivative of Psi}
    \begin{split}
     & \partial^2_y\partial^2_x\Psi_{k,k'}(c_{1,k,k'},c_{2,k,k'}) \\
     &= 4\alpha ^2(2\alpha -1)^2\beta(\beta-1)(2k-2+c_{1,k,k'})^{2\alpha -2}(2k'-2+c_{2,k,k'})^{2\alpha -2}\\
     &\qquad\qquad\qquad\qquad\times\left((2k-2+c_{1,k,k'})^{2\alpha }+(2k'-2+c_{2,k,k'})^{2\alpha }\right)^{\beta-2} \\
     &+ 8\alpha ^3(2\alpha -1)\beta(\beta-1)(\beta-2)(2k-2+c_{1,k,k'})^{2\alpha -2}(2k'-2+c_{2,k,k'})^{4\alpha -2}\\
     &\qquad\qquad\qquad\qquad\times\left((2k-2+c_{1,k,k'})^{2\alpha }+(2k'-2+c_{2,k,k'})^{2\alpha }\right)^{\beta-3} \\
     &+ 8\alpha ^3(2\alpha -1)\beta(\beta-1)(\beta-2)(2k-2+c_{1,k,k'})^{4\alpha -2}(2k'-2+c_{2,k,k'})^{2\alpha -2}\\
     &\qquad\qquad\qquad\qquad\times\left((2k-2+c_{1,k,k'})^{2\alpha }+(2k'-2+c_{2,k,k'})^{2\alpha }\right)^{\beta-3}\\
     &+ 16\alpha ^4\beta(\beta-1)(\beta-2)(\beta-3)(2k-2+c_{1,k,k'})^{4\alpha -2}(2k'-2+c_{2,k,k'})^{4\alpha -2}\\
     &\qquad\qquad\qquad\qquad\times\left((2k-2+c_{1,k,k'})^{2\alpha }+(2k'-2+c_{2,k,k'})^{2\alpha }\right)^{\beta-4}\\
     &=I_1+I_2+I_3+I_4.
     \end{split}
  \end{equation}
  And for all $j\geq 1$ and $k,k'\in \{1,...,2^j\}$ such that $k'<k$, we have
  \begin{equation}\label{derivative of Phi}
    \Phi_{k,k'}^{(4)}(c_{3,k,k'})=\prod_{l=0}^{3}(2\alpha \beta-l)(2(k-k')-2+c_{3,k,k'})^{2\alpha \beta-4}.
  \end{equation}
  Let us first investigate the inequality \eqref{estmation Eujkujkprime}. Combining \eqref{derivative}, \eqref{derivative of Psi} and \eqref{derivative of Phi}, we get for all $j\geq 1$ and $k,k'\in \{1,...,2^j\}$ such that $k'<k$
  \begin{equation}\label{estimation ujkujkprime}
    \left|\E[u_{jk}u_{jk'}]\right|\leq \frac{2^{j(1-2\alpha \beta)}}{2^{\beta+2\alpha \beta}}\left\{\sum_{l=1}^{4}|I_l|+\frac{\prod_{l=0}^{3}|2\alpha \beta-l|}{(2(k-k')-2+c_{3,k,k'})^{4-2\alpha \beta}}\right\}.
  \end{equation}
  \begin{itemize}
    \item For $\alpha \leq \frac{1}{2}$. Let $j\geq 1$ and $k,k'\in \{1,...,2^j\}$ such that $k'<k$, we have by \eqref{derivative of Psi}
        \begin{equation}\label{Il}
          \sum_{l=1}^{4}|I_l|\leq \frac{ \tilde{C}(\alpha ,\beta)}{(2k'-2+c_{1,k,k'}\wedge c_{2,k,k'})^{4-2\alpha \beta}},
        \end{equation}
        where
        \begin{equation}\label{C tilde}
          \begin{split}
              \tilde{C}(\alpha ,\beta)=2^\beta\alpha ^2|\beta-1|\{&\beta(2\alpha -1)^2+2\alpha |2\alpha -1|\beta|\beta-2|\\
               & +\alpha ^2\beta|\beta-2||\beta-3|\}.
          \end{split}
        \end{equation}
        Combining \eqref{estimation ujkujkprime} and \eqref{Il}, we get
        \begin{equation}\label{estimation ujkujkprime H ineferiere 12}
        \begin{split}
           \left|\E[u_{jk}u_{jk'}]\right|\leq \frac{2^{j(1-2\alpha \beta)}}{2^{\beta+2\alpha \beta}}&\left\{\frac{\tilde{C}(\alpha ,\beta)}{(2k'-2+c_{1,k,k'}\wedge c_{2,k,k'})^{4-2\alpha \beta}}\right.\\
             & \left.+\frac{\prod_{l=0}^{3}|2\alpha \beta-l|}{(2(k-k')-2+c_{3,k,k'})^{4-2\alpha \beta}}\right\}.
        \end{split}
  \end{equation}
    \item For $\alpha >\frac{1}{2}$. Let $j\geq 1$ and $k,k'\in \{1,...,2^j\}$ such that $k'<k$, we remark that
    \begin{equation}\label{I2 I3}
    \begin{split}
       I_2+I_3=& \frac{8\alpha ^3(2\alpha -1)\beta(\beta-1)(\beta-2)}{(2k-2+c_{1,k,k'})^{2-2\alpha }(2k'-2+c_{2,k,k'})^{2-2\alpha }}\\
    &\times\frac{1}{\left((2k-2+c_{1,k,k'})^{2\alpha }+(2k'-2+c_{2,k,k'})^{2\alpha }\right)^{2-\beta}}.
    \end{split}
    \end{equation}
    And by the inequality $\frac{ab}{a^2+b^2}\leq \frac{1}{2}$, we have
    \begin{equation}\label{I4}
        \begin{split}
       |I_4|\leq & \frac{4\alpha ^4\beta|\beta-1||\beta-2||\beta-3|}{(2k-2+c_{1,k,k'})^{2-2\alpha }(2k'-2+c_{2,k,k'})^{2-2\alpha }}\\
    &\times\frac{1}{\left((2k-2+c_{1,k,k'})^{2\alpha }+(2k'-2+c_{2,k,k'})^{2\alpha }\right)^{2-\beta}}.
    \end{split}
    \end{equation}
    Combining \eqref{derivative of Psi}, \eqref{I2 I3} and \eqref{I4}, we get
    \begin{equation}\label{Il 12}
          \sum_{l=1}^{4}|I_l|\leq \frac{\tilde{C}(\alpha ,\beta)}{(2k'-2+c_{1,k,k'}\wedge c_{2,k,k'})^{4-2\alpha \beta}},
        \end{equation}
        where  $\tilde{C}(\alpha ,\beta)$ is given by \eqref{C tilde}. According to \eqref{estimation ujkujkprime} and \eqref{Il 12}, we obtain
        \begin{equation}\label{estimation ujkujkprime H superieure 12}
        \begin{split}
           \left|\E[u_{jk}u_{jk'}]\right|\leq \frac{2^{j(1-2\alpha \beta)}}{2^{\beta+2\alpha \beta}}&\left\{\frac{\tilde{C}(\alpha ,\beta)}{(2k'-2+c_{1,k,k'}\wedge c_{2,k,k'})^{4-2\alpha \beta}}\right.\\
             & \left.+\frac{\prod_{l=0}^{3}|2\alpha \beta-l|}{(2(k-k')-2+c_{3,k,k'})^{4-2\alpha \beta}}\right\}.
        \end{split}
  \end{equation}
  \end{itemize}
  We put
  $$C=\frac{\tilde{C}(\alpha ,\beta)\vee \prod_{l=0}^{3}|2\alpha \beta-l|}{2^{\beta+2\alpha \beta}}.$$
  So for all $\alpha \in (0,1)$, $\beta\in (0,1]$, $j\geq 1$ and $k,k'\in\{1,...,2^j\}$ such that $k'<k$, we have
   \begin{equation}\label{estimation ujkujkprime for all H}
        \begin{split}
           \left|\E[u_{jk}u_{jk'}]\right|\leq C 2^{j(1-2\alpha \beta)}&\left\{\frac{1}{(2k'-2+\kappa_{k,k'})^{4-2\alpha \beta}}\right.\\
             & \left.+\frac{1}{(2(k-k')-2+c_{k,k'})^{4-2\alpha \beta}}\right\},
        \end{split}
  \end{equation}
  where $\kappa_{k,k'}= c_{1,k,k'}\wedge c_{2,k,k'}$ and $c_{k,k'}=c_{3,k,k'}$. This finishes the proof of  \eqref{estmation Eujkujkprime}.\par
  Now we will prove \eqref{estmation Eujk2}. For this end, let us start with proving the upper bound, for all $j\geq 1$ and $k\in \{1,...,2^j\}$, we have by \eqref{ujkujkprime} and the mean value theorem, there exist $c_{1,k},c_{2,k}\in (0,2)$ such that
  \begin{align}
  \E[|u_{jk}|^2] &= \frac{2^{j(1-2\alpha \beta)}}{2^{\beta+2\alpha \beta}}(\partial^2_y\partial^2_x\Psi_{k,k}(c_{1,k},c_{2,k})-\Delta^4\Phi_{k,k}(0)) \nonumber \\
                    &= \frac{2^{j(1-2\alpha \beta)}}{2^{\beta+2\alpha \beta}}(\partial^2_y\partial^2_x\Psi_{k,k}(c_{1,k},c_{2,k})+8-2^{2\alpha \beta+1}). \label{ujkujkprime derivative x y}
\end{align}
  \begin{itemize}
    \item For $\alpha \leq \frac{1}{2}$. By \eqref{derivative of Psi} we remark that $\partial^2_y\partial^2_x\Psi_{k,k}(c_{1,k},c_{2,k})\leq 0$, so by using \eqref{ujkujkprime derivative x y} we have
        \begin{equation}\label{estimation of Eujk2}
          \E[|u_{jk}|^2]\leq \frac{8-2^{2\alpha \beta+1}}{2^{\beta+2\alpha \beta}}2^{j(1-2\alpha \beta)}.
        \end{equation}
    \item For $\alpha >\frac{1}{2}$. By \eqref{derivative of Psi} we remark that $I_1,I_4\leq 0$, and by \eqref{I2 I3} we get, for all $j\geq 1$ and $k\in \{2,...,2^j\}$,
    \begin{equation}\label{I2 I3 estimation}
      I_2+I_3\leq \frac{\alpha ^3(2\alpha -1)\beta(\beta-1)(\beta-2)}{2^{3-2\alpha \beta-\beta}}.
    \end{equation}
    So  \eqref{ujkujkprime derivative x y} and \eqref{I2 I3 estimation} entail
    \begin{equation}\label{estimation of Eujk2 H sup 12}
    \begin{split}
       &\E[|u_{jk}|^2]\\
         &\leq 2^{j(1-2\alpha \beta)}\left(\frac{\alpha ^3(2\alpha -1)\beta(\beta-1)(\beta-2)}{8}+\frac{8-2^{2\alpha \beta+1}}{2^{\beta+2\alpha \beta}}\right).
    \end{split}
        \end{equation}
        Now for $j\geq 1$ and $k=1$, we have by \eqref{ujk} and \eqref{self similar},
        \begin{equation}\label{Euj12}
          \E[|u_{j1}|^2]=2^{j(1-2\alpha \beta)}\frac{4\E\left[B^{\alpha ,\beta}(1)-\tfrac{1}{2}B^{\alpha ,\beta}(2)\right]^2}{2^{2\alpha \beta}}.
        \end{equation}
          \end{itemize}
        Put
        \begin{equation}\label{mtildde 2}
        \begin{split}
           m_2=\max &\left\{\left(\frac{\alpha ^3(2\alpha -1)\beta(\beta-1)(\beta-2)}{8}+\frac{8-2^{2\alpha \beta+1}}{2^{\beta+2\alpha \beta}}\right);\right. \\
             & \qquad\left.\frac{4\E\left[B^{\alpha ,\beta}(1)-\tfrac{1}{2}B^{\alpha ,\beta}(2)\right]^2}{2^{2\alpha \beta}}\right\}.
        \end{split}
        \end{equation}
        Combining  \eqref{estimation of Eujk2 H sup 12}, \eqref{Euj12}, \eqref{mtildde 2} and \eqref{estimation of Eujk2}, we get for all $\alpha \in (0,1)$, $\beta\in (0,1]$, $j\geq 1$ and $k\in \{1,...,2^j\}$,
        \begin{equation}\label{Eujk m tilde}
          \E[|u_{jk}|^2]\leq m_2 2^{j(1-2\alpha \beta)}.
        \end{equation}
  This finishes the proof of the upper bound in \eqref{estmation Eujk2}. Let us now investigate the lower bound of \eqref{estmation Eujk2}.  According to \eqref{ujk} and \eqref{self similar}, it follows that for all $j\geq 1$ and $k\in \{1,...,2^j\}$,
    \begin{equation}\label{Eujk k small}
       \E[|u_{jk}|^2]=2^{j(1-2\alpha \beta)}\frac{4C(k)}{2^{2\alpha \beta}},
    \end{equation}
    where
    $$C(k)=\E\left[B^{\alpha ,\beta}(2k-1)-\frac{1}{2}B^{\alpha ,\beta}(2k)-\frac{1}{2}B^{\alpha ,\beta}(2k-2)\right]^2.$$
We know by \cite[Lemma 3.3]{Ait Ouahra} (In this last article to prove the local-nondeterminism of the bBm the authors use the same techniques as those of \cite{Mendy}) that the process $(B^{\alpha ,\beta}(t))_{t\geq 0}$  is locally non-deterministic i.e. for all $0=t_0<t_1<...<t_m<1$ with $t_m-t_1<\delta$ and $(u_1,...,u_m)\in\R^m$,
\begin{equation}\label{SLND BHK}
  \var\left(\sum_{j=1}^{m}u_j[B^{\alpha ,\beta}(t_{j})-B^{\alpha ,\beta}(t_{j-1})]\right)\geq C_m \sum_{j=1}^{m}u_j^2\var\left(B^{\alpha ,\beta}(t_{j})-B^{\alpha ,\beta}(t_{j-1})\right).
\end{equation}
On the other hand by \eqref{self similar}, we have for some $\varepsilon<\frac{\delta}{2}\wedge\frac{1}{2k}$,
\begin{equation}\label{Ck}
\begin{split}
   C(k) = \varepsilon^{-2\alpha \beta}\E &\left[\frac{1}{2}\left[B^{\alpha ,\beta}(\varepsilon(2k-1))-B^{\alpha ,\beta}(\varepsilon(2k-2))\right]\right. \\
     & \left.-\frac{1}{2}\left[B^{\alpha ,\beta}(\varepsilon(2k))-B^{\alpha ,\beta}(\varepsilon(2k-1))\right]\right]^2.
\end{split}
\end{equation}
Combining \eqref{SLND BHK}, \eqref{Ck} and \eqref{var increment}, we get $ C(k) \geq \frac{C_3}{2^{1+\beta}}.$ Hence
$$ \E[|u_{jk}|^2] \geq m_1 2^{j(1-2\alpha \beta)} \qquad\text{with}\qquad m_1=\frac{4C_3}{2^{2\alpha \beta+\beta+1}}.$$
This finishes the proof of Lemma \ref{lem estimation Eujkujkprime}.
\end{proof}
\begin{rem}
    We remark that when $\beta=1$, $\partial^2_y\partial^2_x\Psi_{k,k'}(c_{1,k,k'},c_{2,k,k'})=0$ and hence $\Delta^2_y\Delta^2_x\Psi_{k,k'}(0,0)=0$, so equation \eqref{ujkujkprime} becomes
\begin{equation}\label{ujkujkprime delta0}
  \E[u_{jk}u_{jk'}]=-\frac{2^{j(1-2\alpha )}}{2^{1+2\alpha }}\Delta^4\Phi_{k,k'}(0),
\end{equation}
and this is the same equation as the (IV.9) in \cite{Roynette}, for the fractional Brownian motion.
\end{rem}
\begin{lem}\label{vjkvjk rimep 111}
  There exists a constant $M>0$ such that, for all $j\geq 1$ and $k,k'\in \{1,...,2^j\}$, we have
  \begin{equation}\label{sumEujujprime}
    \sum_{k,k'=1}^{2^j}|\E v_{jk}v_{jk'}|^2\leq M 2^j.
  \end{equation}
\end{lem}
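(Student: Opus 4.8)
The plan is to reduce everything to the pointwise decay estimate \eqref{estmation Eujkujkprime} and then sum. First, since each $v_{jk}$ is centred with $\E v_{jk}^2=1$ by \eqref{vjk}, the Cauchy--Schwarz inequality gives the crude bound $|\E v_{jk}v_{jk'}|\le 1$, and the diagonal terms $k=k'$ contribute exactly $1$ each, hence $2^j$ in total. For the off-diagonal terms I would use the symmetry $\E v_{jk}v_{jk'}=\E v_{jk'}v_{jk}$ to restrict to $k'<k$ (doubling at the end) and combine \eqref{vjk} with the lower bound in \eqref{estmation Eujk2}, namely $\sigma_{jk}\sigma_{jk'}\ge m_1\,2^{j(1-2\alpha\beta)}$, so that the factor $2^{j(1-2\alpha\beta)}$ in \eqref{estmation Eujkujkprime} cancels and one obtains, for $k'<k$,
$$|\E v_{jk}v_{jk'}|\le \frac{C}{m_1}\left(\frac{1}{(2k'-2+\kappa_{k,k'})^{\gamma}}+\frac{1}{(2(k-k')-2+c_{k,k'})^{\gamma}}\right),\qquad \gamma:=4-2\alpha\beta.$$
Note that $\gamma\in(2,4)$ since $0<\alpha\beta<1$, so in particular $2\gamma>1$ and the one-dimensional series $\sum_{m\ge1}m^{-2\gamma}$ converges.

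Next I would split the pairs $k'<k$ into a \emph{bulk} part, where $k'\ge 2$ and $k-k'\ge 2$, and a \emph{boundary} part, where $k'=1$ or $k-k'=1$. The boundary part consists of at most $2\cdot 2^j$ pairs, and on each of them I would simply use $|\E v_{jk}v_{jk'}|^2\le 1$, so its contribution is $O(2^j)$. This is precisely where the subtlety lies: for $k'=1$ the denominator $2k'-2+\kappa_{k,k'}=\kappa_{k,k'}$, and for $k-k'=1$ the denominator $2(k-k')-2+c_{k,k'}=c_{k,k'}$, are governed by the mean-value constants, which a priori lie only in $(0,2)$ and $(0,4)$ and admit no uniform lower bound; the Cauchy--Schwarz bound bypasses this difficulty cleanly, at the cost of only $O(2^j)$.

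On the bulk, both denominators are bounded below, $2k'-2+\kappa_{k,k'}\ge 2k'-2\ge 2$ and $2(k-k')-2+c_{k,k'}\ge 2(k-k')-2\ge 2$, so I can drop the constants. Using $(a+b)^2\le 2(a^2+b^2)$,
$$\sum_{\substack{2\le k'<k\\ k-k'\ge 2}}|\E v_{jk}v_{jk'}|^2\le \frac{2C^2}{m_1^2}\sum_{\substack{2\le k'<k\\ k-k'\ge 2}}\left(\frac{1}{(2k'-2)^{2\gamma}}+\frac{1}{(2(k-k')-2)^{2\gamma}}\right).$$
In the first sum I would treat $k$ as the free index (at most $2^j$ choices for each fixed $k'$) and sum the convergent series in $k'$; in the second sum I would set $n=k-k'$ and treat $k'$ as the free index (at most $2^j$ choices for each fixed $n$) and sum the convergent series in $n$. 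Each of the two sums is then bounded by $2^j$ times a finite constant depending only on $\gamma$, hence on $\alpha,\beta$. Collecting the diagonal, boundary and bulk contributions and absorbing all constants into a single $M=M(\alpha,\beta)$ yields $\sum_{k,k'=1}^{2^j}|\E v_{jk}v_{jk'}|^2\le M\,2^j$.

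The main obstacle, as noted, is the lack of a uniform lower bound on the mean-value constants $\kappa_{k,k'}$ and $c_{k,k'}$ near the edges $k'=1$ and $k=k'+1$, which would otherwise make the corresponding terms of \eqref{estmation Eujkujkprime} blow up; isolating these $O(2^j)$ boundary pairs and estimating them trivially is the one step requiring care, while the remainder is a routine summation of convergent series, for which the decay exponent $2\gamma>1$ provided by the quasi-helix structure is exactly what is needed.
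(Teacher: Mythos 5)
Your proof is correct and follows essentially the same route as the paper: the same splitting into diagonal, boundary pairs ($k'=1$ or $k-k'=1$, handled by the trivial bound $|\E v_{jk}v_{jk'}|\le 1$), and the bulk where \eqref{estmation Eujkujkprime} together with the lower bound of \eqref{estmation Eujk2} gives a summable decay of exponent $8-4\alpha\beta>1$. The only cosmetic difference is that the paper compares the bulk sums to integrals while you sum the convergent series directly, and your explicit identification of why the boundary terms must be isolated (no uniform lower bound on the mean-value constants $\kappa_{k,k'}$, $c_{k,k'}$) is exactly the point implicit in the paper's decomposition.
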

\begin{proof}
Equality \eqref{vjk} and H\"{o}lder's inequality  give
  \begin{align*}
    \sum_{k,k'=1}^{2^j}|\E v_{jk}v_{jk'}|^2 &= 2\sum_{k'<k,\,k'\geq 2 \atop k-k'\geq 2}^{2^j}|\E v_{jk}v_{jk'}|^2 +2\sum_{k'<k,\,k'\geq 2 \atop k-k'=1}^{2^j}|\E v_{jk}v_{jk'}|^2 +2\sum_{k=2}^{2^j}|\E v_{jk}v_{j1}|^2\\
                                            &\qquad +\sum_{k=1}^{2^j}\{\E [|v_{jk}|^2]\}^2\\
                                            &\leq 2\sum_{k'<k,\,k'\geq 2 \atop k-k'\geq 2}^{2^j}\left|\frac{\E u_{jk}u_{jk'}}{\sigma_{jk}\sigma_{jk'}}\right|^2 +2(2^j-2) +2(2^j-1)+2^j\\
                                            &= 2J+5\,2^j-6.
  \end{align*}
  We will estimate  $J$. For this end, let us note $A=\frac{2C^2}{m_2^2}$, so we obtain by \eqref{estmation Eujkujkprime} and \eqref{estmation Eujk2},\\
  \begin{align*}
    J &\leq A\sum_{k'<k,\,k'\geq 2 \atop k-k'\geq 2}^{2^j} \left\{\frac{1}{(2k'-2+\kappa_{k,k'})^{8-4\alpha \beta}}+\frac{1}{(2(k-k')-2+c_{k,k'})^{8-4\alpha \beta}}\right\}\\
     &= A\sum_{k=4}^{2^j}\sum_{k'=2}^{k-2} \left\{\frac{1}{(2k'-2+\kappa_{k,k'})^{8-4\alpha \beta}}+\frac{1}{(2(k-k')-2+c_{k,k'})^{8-4\alpha \beta}}\right\} \\
     &\leq  A\sum_{k=4}^{2^j}\sum_{k'=2}^{k-2} \left\{\frac{1}{(2k'-2)^{8-4\alpha \beta}}+\frac{1}{(2(k-k')-2)^{8-4\alpha \beta}}\right\} \\
     &\leq A\sum_{k=4}^{2^j}\sum_{k'=2}^{k-2} \left\{\int_{2k'-3}^{2k'-2}\frac{1}{x^{8-4\alpha \beta}}dx+\int_{2(k-k')-3}^{2(k-k')-2}\frac{1}{x^{8-4\alpha \beta}}dx\right\} \\
     &= 2A\sum_{k=4}^{2^j}\sum_{k'=2}^{k-2}\int_{2k'-3}^{2k'-2}\frac{1}{x^{8-4\alpha \beta}}dx \\
     &\leq 2A\sum_{k=4}^{2^j}\int_{1}^{2k-6}\frac{1}{x^{8-4\alpha \beta}}dx \\
     &\leq \frac{2A}{7-4\alpha \beta}(2^j-3).
  \end{align*}
  And this proves Lemma \ref{vjkvjk rimep 111}.
\end{proof}
\begin{lem}\label{vjk mmoins cp 111}
For all $j\geq1$ and $k\in \{1,...,2^j\}$, we have
  \begin{equation}\label{vjk moins cp 111}
    \E\left[\sum_{k=1}^{2^j}(|v_{jk}|^p-c_p)\right]^2\leq (c_{2p}-c_p^2)M 2^j,
  \end{equation}
  where $c_p=\frac{1}{\sqrt{2\pi}}\int_{\R}|x|^pe^{-\tfrac{x^2}{2}}dx.$
\end{lem}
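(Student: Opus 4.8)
The plan is to expand the square, split into diagonal and off-diagonal terms, treat the diagonal directly using that each $v_{jk}$ is standard normal, and control the cross terms by combining the Cauchy--Gaussian inequality (Lemma \ref{cauchy gaussian}) with the second-moment estimate of Lemma \ref{vjkvjk rimep 111}. First I would note that $u_{jk}$ is a centered Gaussian variable, being a linear combination of values of the centered process $B^{\alpha,\beta}$; hence after the normalization in \eqref{vjk} each $v_{jk}$ is a standard $N(0,1)$ variable. Consequently $\E[|v_{jk}|^p]=c_p$ and $\E[|v_{jk}|^{2p}]=c_{2p}$, so the variables $Y_k:=|v_{jk}|^p-c_p$ are centered with $\E[Y_k^2]=c_{2p}-c_p^2$ for every $k$. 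Expanding the square gives
$$\E\left[\sum_{k=1}^{2^j}Y_k\right]^2=\sum_{k=1}^{2^j}\E[Y_k^2]+\sum_{k\neq k'}\E[Y_kY_{k'}]=(c_{2p}-c_p^2)\,2^j+\sum_{k\neq k'}\E[Y_kY_{k'}],$$
so the whole problem reduces to bounding the off-diagonal sum.

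For the cross terms, the crucial observation is that $f(x)=|x|^p-c_p$ is \emph{even} and centered under the standard Gaussian law. This is exactly the situation in which Lemma \ref{cauchy gaussian} delivers the squared correlation coefficient: with $\rho_{k,k'}=|\E v_{jk}v_{jk'}|$ one obtains
$$|\E[Y_kY_{k'}]|=|\E f(v_{jk})f(v_{jk'})|\leq \rho_{k,k'}^2\{\E[f(v_{jk})^2]\}^{1/2}\{\E[f(v_{jk'})^2]\}^{1/2}=(c_{2p}-c_p^2)\,|\E v_{jk}v_{jk'}|^2.$$
Summing over $k\neq k'$ and invoking Lemma \ref{vjkvjk rimep 111}, which gives $\sum_{k,k'=1}^{2^j}|\E v_{jk}v_{jk'}|^2\leq M\,2^j$ (and in which the diagonal contributes exactly $2^j$, since $\E v_{jk}^2=1$), the off-diagonal sum is at most $(c_{2p}-c_p^2)(M\,2^j-2^j)$. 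Adding the diagonal contribution $(c_{2p}-c_p^2)2^j$ collapses the bound to exactly $(c_{2p}-c_p^2)M\,2^j$, which is \eqref{vjk moins cp 111}.

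I do not anticipate a genuine obstacle here; the step that must not be overlooked is the use of the \emph{even} version of the Cauchy--Gaussian inequality to produce $\rho^2$ rather than $\rho$. This is essential: Lemma \ref{vjkvjk rimep 111} controls $\sum|\E v_{jk}v_{jk'}|^2$ and \emph{not} $\sum|\E v_{jk}v_{jk'}|$, so the squared correlation is precisely what matches the available estimate, and it is what makes the final constant come out as $(c_{2p}-c_p^2)M$ exactly.
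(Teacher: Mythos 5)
Your proof is correct and follows essentially the same route as the paper: expand the square, apply the even case of Lemma \ref{cauchy gaussian} with $f(x)=|x|^p-c_p$ to get the factor $(c_{2p}-c_p^2)|\E v_{jk}v_{jk'}|^2$, and conclude with Lemma \ref{vjkvjk rimep 111}. Your separate treatment of the diagonal is only a cosmetic reorganization, since applying the same bound there with $\rho=1$ gives the identical contribution.
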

\begin{proof}
  First we get
  $$\E\left[\sum_{k=1}^{2^j}(|v_{jk}|^p-c_p)\right]^2=\sum_{k,k'=1}^{2^j}\E\left[(|v_{jk}|^p-c_p)(|v_{jk'}|^p-c_p)\right].$$
  And by applying Lemma \ref{cauchy gaussian}, with $f(x)=g(x)=|x|^p-c_p$,  we obtain
  $$\E\left[\sum_{k=1}^{2^j}(|v_{jk}|^p-c_p)\right]^2\leq (c_{2p}-c_p^2)\sum_{k,k'=1}^{2^j}|\E[v_{jk}v_{jk'}]|^2.$$
  Inequality \eqref{sumEujujprime} of Lemma \ref{vjkvjk rimep 111} ends the proof of Lemma \ref{vjk mmoins cp 111}.
\end{proof}
Now, we are ready to show Theorem \ref{pricipal BHK}
\begin{proof}[of Theorem \ref{pricipal BHK}]
We are going to prove that, almost surly
\begin{equation}\label{convergence to}
  2^{-j}\sum_{k=1}^{2^j}|v_{jk}|^p\underset{j \to \infty}\longrightarrow c_p.
\end{equation}
For this end we will show that for all $\varepsilon>0$ we have
\begin{equation}\label{serie}
  \sum_{j\geq 1} \mathbb{P}\left\{2^{-j}\sum_{k=1}^{2^j}|v_{jk}|^p\notin [c_p-\varepsilon,c_p+\varepsilon]\right\}<\infty.
\end{equation}
Markov's inequality gives
\begin{equation}\label{inferieur}
  \mathbb{P}\left\{2^{-j}\sum_{k=1}^{2^j}|v_{jk}|^p\notin [c_p-\varepsilon,c_p+\varepsilon]\right\}\leq \frac{1}{\varepsilon^22^{2j}}\E\left[\sum_{k=1}^{2^j}(|v_{jk}|^p-c_p)\right]^2.
\end{equation}
Combining the inequality \eqref{inferieur} and Lemma  \ref{vjk mmoins cp 111}, we get that \eqref{serie} holds and \eqref{convergence to} is then a consequence of   Borel-Cantelli Lemma. Finally our main result, Theorem \ref{pricipal BHK}, is a simple consequence of Theorem \ref{Besov}.
\end{proof}
\begin{rem}
\begin{enumerate}
  \item Let $(Y^\alpha_t)_{t\geq0}$ be a sub-fractional Brownian motion i.e. a mean zero Gaussian process with covariance function
  $$\E[Y^\alpha_tY^\alpha_s]=s^{2\alpha}+t^{2\alpha}-\frac{1}{2}\left[(s+t)^{2\alpha}+|t-s|^{2\alpha}\right],$$
 where $\alpha \in (0,1)$. We believe that by the same calculations as in the above one can get that almost all paths of the sub-fractional Brownian motion belong (resp. do not belong) to the Besov spaces  $\mathbf{Bes}(\alpha ,p)$ (resp. $\mathbf{bes}(\alpha, p)$).
  \item Let $r(t)$ be a real valued function, such that the kernel $K_r(t,s)$ defined by
  $$K_r(t,s):=r(t)-r(s)-r(t-s),$$
  is positive on the real line, and let $\varphi$ be as follows
  $$\varphi(t)=\int_{0}^{\infty}(1-e^{-u|t|})dm(u),$$
  where $m$ is a positive measure on $[0,\infty)$ such that $\int_{1}^{\infty}dm(u)<\infty$. Therefore by \cite[Theorem 5.1.]{AlpayLevanony}, we get that
  $$K(s,t):=\varphi(r(t)+r(s))-\varphi(r(t-s)),$$
  is a positive Kernel on the real line. Let $(X_t)_{t\geq 0}$ be a centred Gaussian process with covariance function
      $$\E[X_tX_s]=K(s,t).$$
         If in addition we assume that $\varphi$ and $r$ are in $\mathcal{C}^4((0,\infty))$, and that for all $a>0$, we have $r(at)=a^{\alpha}r(t)$ and $\varphi(ax)=a^{\beta}\varphi(x)$ (i.e. $X$ is a self-similar Gaussian process with index $\frac{\alpha\beta}{2}$). With the analogous (but more involved) computations as above, one can get that almost all paths of the process $X$ belong (resp. do not belong) to the Besov spaces  $\mathbf{Bes}(\alpha\beta/2,p)$ (resp. $\mathbf{bes}(\alpha\beta/2, p)$).
\end{enumerate}
\end{rem}
\section{An It\^{o}-Nisio theorem for the bifractional Brownian motion.}
 Let $\mathcal{E}$ be the linear space generated by the  indicator functions $\mathbbm{1}_{[0,t]}$ endowed with the inner product
 \begin{equation}\label{inner product}
   <\mathbbm{1}_{[0,t]},\mathbbm{1}_{[0,s]}>_{\mathcal{H}}=R(s,t)\,,\,\,\, s, t\in[0,1].
 \end{equation}
Define $\mathcal{H}$ as the completion of $\mathcal{E}$ w.r.t. the inner product $<.,.>_{\mathcal{H}}$. The application $\varphi\in \mathcal{H} \to B^{\alpha,\beta}(\varphi)$ is an isometry from $\mathcal{H}$ to the Gaussian space generated by $B^{\alpha,\beta}$. $B^{\alpha,\beta}(\varphi)$ is the Wiener integral of $\varphi$ w.r.t. $B^{\alpha,\beta}$.

Our main result in this paragraph is the following It\^{o}-Nisio theorem for the bifractional Brownian motion
\begin{thm}\label{Ito Nisio}
    Let $\alpha \beta>\frac{1}{2}$ and $(\varphi_n)_{n\geq1}$ be an orthono rmal  basis of $\mathcal{H}$. Then we have almost surly
    $$\sum_{n=1}^{N}<\varphi_n,\mathbbm{1}_{[0,t]}>_{\mathcal{H}}B^{\alpha,\beta}(\varphi_n)\underset{N \to +\infty}\longrightarrow B^{\alpha,\beta}(t)\quad\text{in the Besov space }
\mathbf{Bes}(\alpha \beta-\varepsilon,p),$$
where $\varepsilon>0 $ and $p\geq1$ are such that
$\frac{1}{2}<\alpha \beta-\varepsilon-\frac{1}{p} $.
\end{thm}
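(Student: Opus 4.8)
The plan is to realise the partial sums as a Gaussian series in a separable Banach space and to upgrade convergence in probability to almost sure convergence in the Besov norm by an It\^o--Nisio argument, the quantitative input being the variance bounds of Lemma~\ref{lem estimation Eujkujkprime}. Write $g_n:=B^{\alpha,\beta}(\varphi_n)$, which form an i.i.d.\ standard Gaussian sequence by the isometry property, and set $h_n(t):=<\varphi_n,\mathbbm{1}_{[0,t]}>_{\HH}$, so that the $N$-th partial sum is $S_N(t)=\sum_{n=1}^N h_n(t)g_n=B^{\alpha,\beta}(P_N\mathbbm{1}_{[0,t]})$, where $P_N$ is the orthogonal projection of $\HH$ onto $\mathrm{span}(\varphi_1,\dots,\varphi_N)$. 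Put $\gamma:=\alpha\beta-\varepsilon$ and $S:=B^{\alpha,\beta}(\cdot)$. The admissibility condition $\tfrac12<\gamma-\tfrac1p$ gives $\tfrac1p<\gamma<1$ (and forces $p>2$), so Theorem~\ref{Besov} applies; moreover $\gamma<\alpha\beta$, whence by Theorem~\ref{pricipal BHK} and the embedding $\mathbf{Bes}(\alpha\beta,p)\hookrightarrow\mathbf{bes}(\gamma,p)$ the limit $S$ lies almost surely in the separable space $E:=\mathbf{bes}(\gamma,p)$. Each $h_n$ is $\alpha\beta$-H\"older by \eqref{var increment} and Cauchy--Schwarz, hence $h_n\in\mathcal{C}^{\alpha\beta}\hookrightarrow\mathbf{Bes}(\alpha\beta,p)\hookrightarrow E$, so every $S_N$ and the remainder $R_N:=S-S_N$ also take values in $E$. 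It therefore suffices to prove $\|R_N\|_{\gamma,p}\to0$ almost surely.

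The heart of the argument is a moment estimate on the Schauder coefficients of $R_N$, uniform in $N$. Since the coefficient map is linear and $u_{jk}=B^{\alpha,\beta}(\eta_{jk})$ for the natural $\eta_{jk}\in\HH$ producing \eqref{ujk}, the level-$(j,k)$ coefficient of $R_N$ is $r_{jk}^{(N)}=u_{jk}-B^{\alpha,\beta}(P_N\eta_{jk})=B^{\alpha,\beta}((I-P_N)\eta_{jk})$; it is centred Gaussian with $\var(r_{jk}^{(N)})=\|(I-P_N)\eta_{jk}\|_{\HH}^2\le\|\eta_{jk}\|_{\HH}^2=\E[|u_{jk}|^2]\le m_2\,2^{j(1-2\alpha\beta)}$ by \eqref{estmation Eujk2}. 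Writing $d_j^{(N)}:=2^{-j(\frac{1}{2}-\gamma+\frac{1}{p})}\left[\sum_{k=1}^{2^j}|r_{jk}^{(N)}|^p\right]^{1/p}$ for the $j$-th block of the norm in Theorem~\ref{Besov}, the Gaussian identity $\E|r_{jk}^{(N)}|^p=c_p\,\var(r_{jk}^{(N)})^{p/2}$ and summation over the $2^j$ indices give
\[
\E\left[(d_j^{(N)})^p\right]\le c_p\,m_2^{p/2}\,2^{-jp(\frac{1}{2}-\gamma+\frac{1}{p})}\,2^j\,2^{\frac{jp}{2}(1-2\alpha\beta)}=c_p\,m_2^{p/2}\,2^{-jp\varepsilon},
\]
the exponent collapsing exactly to $-p\varepsilon<0$. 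This bound is independent of $N$ and summable in $j$.

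Convergence in probability now follows by a head/tail split. Given $\delta>0$, choose $J$ with $\sum_{j>J}c_p m_2^{p/2}2^{-jp\varepsilon}<\delta$; then $\E\left[\sup_{j>J}(d_j^{(N)})^p\right]\le\delta$ uniformly in $N$, which controls the high-frequency part. For the finitely many levels $j\le J$ one uses that $r_{jk}^{(N)}=\sum_{n>N}<\eta_{jk},\varphi_n>_{\HH}g_n\to0$ almost surely as $N\to\infty$, being the tail of an a.s.\ convergent Gaussian series, so $\max_{j\le J}d_j^{(N)}\to0$. The two remaining coefficients are harmless: $S_N(0)=S(0)=0$, and $f_1^{(N)}-f_1=-R_N(1)\to0$ almost surely. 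Hence $\|R_N\|_{\gamma,p}\to0$ in probability.

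Finally, $S_N=\sum_{n=1}^N X_n$ with $X_n:=h_n g_n$ independent, symmetric and $E$-valued. For sums of independent symmetric random variables in a separable Banach space, convergence in probability implies almost sure convergence (the It\^o--Nisio theorem, in the form used by Kerkyacharian and Roynette \cite{KerkyacharianRoynette}); hence $S_N\to S$ almost surely in $E=\mathbf{bes}(\gamma,p)$, and the continuous injection $\mathbf{bes}(\gamma,p)\hookrightarrow\mathbf{Bes}(\gamma,p)$ yields the claim. The main obstacle is precisely the passage from the uniform moment bound to almost sure convergence of the full Besov norm: the non-separability of $\mathbf{Bes}(\gamma,p)$ rules out a direct Borel--Cantelli upgrade over the supremum in $j$, and it is to circumvent this that one works inside the separable subspace $\mathbf{bes}(\gamma,p)$ and invokes It\^o--Nisio. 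The delicate bookkeeping is to verify that all the processes genuinely live in $\mathbf{bes}(\gamma,p)$, and that the variance monotonicity under $P_N$ makes the tail estimate uniform in $N$.
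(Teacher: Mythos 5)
Your proof is correct, and it shares with the paper the core computation: both arguments work level by level on the Schauder coefficients of the remainder $R_N$, identify the $(j,k)$ coefficient as $B^{\alpha,\beta}\big((I-P_N)\eta_{jk}\big)$, and control its variance by the bound $\E[|u_{jk}|^2]\leq m_2\,2^{j(1-2\alpha\beta)}$ from Lemma~\ref{lem estimation Eujkujkprime} (the paper arrives at essentially the same estimate by rescaling the coefficients $\theta^n_{j,k}$ via self-similarity and applying Bessel's inequality, which is your ``orthogonal projections contract the $\mathcal{H}$-norm'' step written out explicitly). Where you genuinely diverge is in how almost sure convergence is extracted. The paper aims for a direct a.s.\ bound: it sets $A_N=\sup_{j,k}\sum_{n>N}\{\E[B^{\alpha,\beta}(\tilde{g}_{jk})B^{\alpha,\beta}(\varphi^{a_j^{-1}}_n)]\}^2$, deduces $\sup_{j}2^{-jp(\frac12-\gamma+\frac1p)}\sum_k|u_{jk}-z_{jk}|^p\leq 2^{-p\alpha\beta}A_N^{p/2}$ almost surely, and lets $A_N\to0$; this route rests on two assertions the paper does not really justify (the claimed monotonicity in $j$ of that sequence, and the vanishing of the supremum over the infinitely many pairs $(j,k)$ in $A_N$). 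You instead prove only convergence in probability of $\|R_N\|_{\gamma,p}$ --- combining the uniform-in-$N$, summable-in-$j$ moment bound $\E[(d_j^{(N)})^p]\leq c_p m_2^{p/2}2^{-jp\varepsilon}$ for the high levels with pointwise a.s.\ convergence of the finitely many low-frequency coefficients --- and then invoke the abstract It\^o--Nisio theorem for sums of independent symmetric random variables in the separable space $\mathbf{bes}(\gamma,p)$ to upgrade to almost sure convergence. This is the Kerkyacharian--Roynette scheme; it buys a cleaner argument that sidesteps the uniformity-in-$(j,k)$ issue entirely, at the modest price of the functional-analytic bookkeeping (separability of $\mathbf{bes}(\gamma,p)$ and the verification that $S$, the $S_N$ and the $h_n$ all take values there), which you carry out correctly.
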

By classical continuous injections we can deduce
\begin{cor}
 Suppose that $\alpha\beta> \frac{1}{2} $ and $(\varphi_n)_{n\geq1}$ be an orthonormal  basis of $\mathcal{H}$. Then we have almost surly
    $$\sum_{n=1}^{N}<\varphi_n,\mathbbm{1}_{[0,t]}>_{\mathcal{H}}B^{\alpha,\beta}(\varphi_n)\underset{N \to +\infty}\longrightarrow B^{\alpha,\beta}(t)\quad\text{in the H\"{o}lder space }
\mathbf{C}^{\gamma},$$
 for any $\gamma<\alpha\beta$.
\end{cor}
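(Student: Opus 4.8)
The plan is to obtain the corollary as an immediate consequence of Theorem~\ref{Ito Nisio} together with the continuous injection $\mathbf{Bes}(\gamma',p)\hookrightarrow\mathcal{C}^{\gamma'-1/p}$ recorded in the Remark after Theorem~\ref{Besov} (valid for $\tfrac1p<\gamma'<1$). The entire argument reduces to choosing the two free parameters $\varepsilon$ and $p$ in the theorem so that the prescribed smoothness $\gamma$ is reached after the embedding; no genuinely new estimate is needed.

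Fix $\gamma<\alpha\beta$. Since $\alpha<1$ and $\beta\le1$ we have $\alpha\beta<1$, so there is no loss in assuming $0<\gamma<1$; set $\delta:=\alpha\beta-\gamma>0$ and note that $\alpha\beta-\tfrac12>0$ by hypothesis. First I would pick $\varepsilon>0$ and a real $p\ge1$ so small (resp. large) that
$$\varepsilon+\frac1p<\min\Bigl\{\delta,\ \alpha\beta-\tfrac12\Bigr\}.$$
With this single choice three facts hold at once. Firstly, $\alpha\beta-\varepsilon-\tfrac1p>\tfrac12$, which is exactly the condition needed to apply Theorem~\ref{Ito Nisio} with the pair $(\varepsilon,p)$; hence almost surely the partial sums converge to $B^{\alpha,\beta}(\cdot)$ in $\mathbf{Bes}(\alpha\beta-\varepsilon,p)$. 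Secondly, from $\alpha\beta-\varepsilon-\tfrac1p>\tfrac12>0$ we deduce $\tfrac1p<\alpha\beta-\varepsilon$, while $\alpha\beta-\varepsilon<\alpha\beta<1$; thus $\tfrac1p<\alpha\beta-\varepsilon<1$ and the Remark yields the continuous injection $\mathbf{Bes}(\alpha\beta-\varepsilon,p)\hookrightarrow\mathcal{C}^{\alpha\beta-\varepsilon-1/p}$. Thirdly, from $\varepsilon+\tfrac1p<\delta=\alpha\beta-\gamma$ we obtain $\alpha\beta-\varepsilon-\tfrac1p>\gamma$, so on the bounded interval $[0,1]$ one has the elementary Hölder embedding $\mathcal{C}^{\alpha\beta-\varepsilon-1/p}\hookrightarrow\mathcal{C}^{\gamma}$, because $|x-y|^{(\alpha\beta-\varepsilon-1/p)-\gamma}\le1$ whenever $|x-y|\le1$.

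Composing the two continuous injections $\mathbf{Bes}(\alpha\beta-\varepsilon,p)\hookrightarrow\mathcal{C}^{\alpha\beta-\varepsilon-1/p}\hookrightarrow\mathcal{C}^{\gamma}$ then transfers the almost sure convergence supplied by Theorem~\ref{Ito Nisio} from the Besov norm to the Hölder norm $\|\cdot\|_{\gamma}$, on the same probability-one event: a sequence convergent in the finer topology stays convergent, to the same limit, in the coarser one. Since $\gamma<\alpha\beta$ was arbitrary, this proves the corollary. I do not expect a substantial obstacle here; the only point requiring care is the bookkeeping that forces the theorem's hypothesis $\tfrac12<\alpha\beta-\varepsilon-\tfrac1p$ and the embedding's hypothesis $\tfrac1p<\alpha\beta-\varepsilon<1$ to hold simultaneously, which is precisely what the constraint $\varepsilon+\tfrac1p<\min\{\delta,\alpha\beta-\tfrac12\}$ guarantees.
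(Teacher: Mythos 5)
Your proposal is correct and is precisely the argument the paper intends: the corollary is stated as a direct consequence of Theorem~\ref{Ito Nisio} ``by classical continuous injections,'' and your parameter bookkeeping ($\varepsilon+\tfrac1p<\min\{\alpha\beta-\gamma,\ \alpha\beta-\tfrac12\}$) together with the chain $\mathbf{Bes}(\alpha\beta-\varepsilon,p)\hookrightarrow\mathcal{C}^{\alpha\beta-\varepsilon-1/p}\hookrightarrow\mathcal{C}^{\gamma}$ simply makes explicit the details the paper leaves to the reader.
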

\begin{proof}[of Theorem \ref{Ito Nisio}]
 Put
  $\quad X_N(t)=\displaystyle\sum_{n=1}^{N}<\varphi_n,\mathbbm{1}_{[0,t]}>_{\mathcal{H}}B^{\alpha ,\beta}(\varphi_n).$ And define

  \begin{equation*}
  z_{jk}:=2\cdot 2^{j/2}\left\{X_N\left(\frac{2k-1}{2^{j+1}}\right)-\frac{1}{2}X_N\left(\frac{2k}{2^{j+1}}\right)-\frac{1}{2}X_N\left(\frac{2k-2}{2^{j+1}}\right)\right\}.
\end{equation*}
Let $\{h_{jk},\;j\geq 0,\;k=1,...,2^j\} $ be the Haar functions defined as follows
$$h_{jk}=\sqrt{2^j}\mathbbm{1}_{[\frac{2k-2}{2^{j+1}},\frac{2k-1}{2^{j+1}}[}-\sqrt{2^j}\mathbbm{1}_{[\frac{2k-1}{2^{j+1}},\frac{2k}{2^{j+1}}[}\qquad\text{and}\qquad h_1=\mathbbm{1}_{[0,1]}.$$
  Remark that
  \begin{equation*}
    \begin{split}
        u_{jk}-z_{jk}&= B^{\alpha ,\beta}(h_{jk})-X_N(h_{jk}) \\
         &= B^{\alpha ,\beta}(h_{jk})-\sum_{n=1}^{N}<\varphi_n,h_{jk}>_{\mathcal{H}}B^{\alpha ,\beta}(\varphi_n).
    \end{split}
  \end{equation*}
  Set
  \begin{equation}\label{omega}
    \omega_{jk}^{N}=\frac{u_{jk}-z_{jk}}{\varrho^N_{jk}}\qquad\text{with}\qquad\varrho^N_{jk}=\{\E[|u_{jk}-z_{jk}|^2]\}^{1/2}.
  \end{equation}
  First, by Borel-Cantelli lemma, we can easily show that almost surly
  \begin{equation}\label{cv ujk moins zjk}
    2^{-j(1+\varepsilon p)}\sum_{k=1}^{2^j}|\omega_{jk}^{N}|^p\underset{j \to \infty}\longrightarrow 0.
  \end{equation}
Let $\phi, \psi\in \mathcal{H}$ and $\sum_{i=1}^{L_n} \lambda^n_i\mathbbm{1}_{[0,t^n_i]}$,  $\sum_{j=1}^{M_m} \mu^m_j\mathbbm{1}_{[0,s^m_j]}$ be two sequences in $\mathcal{E}$ such that
$$\phi=\lim_{n\to \infty}\sum_{i=1}^{L_n} \lambda^n_i\mathbbm{1}_{[0,t^n_i]}\qquad\text{and}\qquad \psi=\lim_{m\to \infty} \sum_{j=1}^{M_m} \mu^m_j\mathbbm{1}_{[0,s^m_j]}\qquad\text{in }\mathcal{H}.$$
Define for all $a>0$,
$$\phi^a=\lim_{n\to \infty}\sum_{i=1}^{L_n} \lambda^n_i\mathbbm{1}_{[0,at^n_i]}\qquad\text{and}\qquad \psi^a=\lim_{m\to \infty} \sum_{j=1}^{M_m} \mu^m_j\mathbbm{1}_{[0,as^m_j]}\qquad\text{in }\mathcal{H}.$$
One can see easily by \eqref{self similar} and a density argument that
\begin{equation}\label{self semilar general}
  \E[B^{\alpha ,\beta}(\phi^a)B^{\alpha ,\beta}(\psi^a)]=a^{2\alpha \beta}\E[B^{\alpha ,\beta}(\phi)B^{\alpha ,\beta}(\psi)].
\end{equation}
Set $\theta^n_{j,k}=<\varphi_n,h_{jk}>_{\mathcal{H}}$, we get
   \begin{align}
    |\varrho^N_{jk}|^2 &= \E\left[B^{\alpha ,\beta}(h_{jk})-\sum_{n=1}^{N}<\varphi_n,h_{jk}>_{\mathcal{H}}B^{\alpha ,\beta}(\varphi_n)\right]^2 \nonumber \\
    &= ||h_{jk}||^2_{\mathcal{H}}+\sum_{n=1}^{N}|\theta^n_{j,k}|^2-2\sum_{n=1}^{N}|\theta^n_{j,k}|^2=\sum_{n=N+1}^{\infty}|\theta^n_{j,k}|^2.\label{rho 2}
   \end{align}
   Put
   $$\tilde{h}_{jk}=\mathbbm{1}_{[\frac{2k-2}{2^{j+1}},\frac{2k-1}{2^{j+1}}[}-\mathbbm{1}_{[\frac{2k-1}{2^{j+1}},\frac{2k}{2^{j+1}}[},\qquad\tilde{g}_{jk}=\mathbbm{1}_{[\frac{2k-2}{2^{(j+1)/2}},\frac{2k-1}{2^{(j+1)/2}}[}-\mathbbm{1}_{[\frac{2k-1}{2^{(j+1)/2}},\frac{2k}{2^{(j+1)/2}}[},$$
  \begin{equation}\label{gkk}
    g_{k}=\mathbbm{1}_{[2k-2,2k-1[}-\mathbbm{1}_{[2k-1,2k[}.
  \end{equation}
    Let $a_j=2^{-(j+1)/2}$, we have by \eqref{self semilar general}
    \begin{align}\label{theta}
      \theta^n_{j,k} &= \E[B^{\alpha ,\beta}(h_{jk})B^{\alpha ,\beta}(\varphi_n)] \nonumber\\
      &=2^{j/2}\E[B^{\alpha ,\beta}(\tilde{h}_{jk})B^{\alpha ,\beta}(\varphi_n)]\nonumber\\
      &=2^{j/2}  \E[B^{\alpha ,\beta}(\tilde{g}^{a_j}_{jk})B^{\alpha ,\beta}((\varphi^{a^{-1}_j}_n)^{a_j})] \nonumber\\
      &= 2^{-\alpha \beta}2^{j(1/2-\alpha \beta)}\E[B^{\alpha ,\beta}(\tilde{g}_{jk})B^{\alpha ,\beta}(\varphi^{a^{-1}_j}_n)].
    \end{align}
    Hence combining \eqref{rho 2} and \eqref{theta}, we get
   \begin{align}\label{rho N 2}
     |\varrho^N_{jk}|^2 &= 2^{-2\alpha \beta}2^{j(1-2\alpha \beta)}\sum_{n=N+1}^{\infty}\left\{\E[B^{\alpha ,\beta}(\tilde{g}_{jk})B^{\alpha ,\beta}(\varphi^{a^{-1}_j}_n)]\right\}^2\nonumber\\
    & \leq 2^{-2\alpha \beta}2^{j(1-2\alpha \beta)}\sup_{j,k}\sum_{n=N+1}^{\infty}\left\{\E[B^{\alpha ,\beta}(\tilde{g}_{jk})B^{\alpha ,\beta}(\varphi^{a^{-1}_j}_n)]\right\}^2.
   \end{align}
  The supremum  in the last term of the inequality is finite. 
   In fact  we remark that $(a^{\alpha \beta}_j\varphi^{a^{-1}_j}_n)_{n\geq 1}$ is an orthonormal basis of $\mathcal{H}$, therefore
   \begin{align}\label{sup N}
     \sum_{n=N+1}^{\infty}\left\{\E[B^{\alpha ,\beta}(\tilde{g}_{jk})B^{\alpha ,\beta}(\varphi^{a^{-1}_j}_n)]\right\}^2 &= a^{-2\alpha \beta}_j\sum_{n=N+1}^{\infty}<\tilde{g}_{jk},a^{\alpha \beta}_j\varphi^{a^{-1}_j}_n>^2_{\mathcal{H}}\nonumber\\
     &\leq a^{-2\alpha \beta}_j\sum_{n=1}^{\infty}<\tilde{g}_{jk},a^{\alpha \beta}_j\varphi^{a^{-1}_j}_n>^2_{\mathcal{H}}\nonumber\\
     &= a^{-2\alpha \beta}_j||\tilde{g}_{jk}||^2_{\mathcal{H}}= a^{-2\alpha \beta}_j||g^{a_j}_{k}||^2_{\mathcal{H}}=||g_{k}||^2_{\mathcal{H}}.
   \end{align}
   On the other hand, we have by \eqref{Eujk m tilde} that $\E[|u_{jk}|^2]\leq m_2 2^{j(1-2\alpha \beta)}.$ Therefore by \eqref{ujk}, \eqref{self similar}, \eqref{gkk} and \eqref{inner product} we get for all $k\geq 1$,
   \begin{equation}\label{gk}
     ||g_{k}||^2_{\mathcal{H}}=\E[2B^{\alpha ,\beta}(2k-1)-B^{\alpha ,\beta}(2k)-B^{\alpha ,\beta}(2k-2)]^2\leq m_2.
   \end{equation}
   According to \eqref{sup N} we obtain for all $N\geq 1$,
   \begin{equation}\label{AN}
     A_N:=\sup_{j,k}\sum_{n=N+1}^{\infty}\left\{\E[B^{\alpha ,\beta}(\tilde{g}_{jk})B^{\alpha ,\beta}(\varphi^{a^{-1}_j}_n)]\right\}^2<\infty.
   \end{equation}
   $(A_N)_{N\geq 1}$ is a non-increasing real valued sequence such that
   \begin{equation}\label{MAN}
   \lim_{N} A_N=0 \qquad \text{a.s..}
   \end{equation}
   We derive from \eqref{omega}, \eqref{rho N 2} and \eqref{AN} the following inequality
   \begin{equation}\label{2jk}
     2^{-j(1+\varepsilon p)}\sum_{k=1}^{2^j}|\omega_{jk}^{N}|^p\geq 2^{p\alpha \beta} 2^{-jp(\frac{1}{2}-(\alpha \beta-\varepsilon)+\frac{1}{p})}\sum_{k=1}^{2^j}\frac{|u_{jk}-z_{jk}|^p}{A^{p/2}_N}.
   \end{equation}
   We remark that the sequence $2^{-jp(\frac{1}{2}-(\alpha \beta-\varepsilon)+\frac{1}{p})}\sum_{k=1}^{2^j}|u_{jk}-z_{jk}|^p$ is increasing in $j$ for $\varepsilon$ small enough, $p$ large enough and $\alpha \beta>\frac{1}{2}$. Therefore by  \eqref{cv ujk moins zjk} and \eqref{2jk}, we get that almost surely,
   $$ \sup_{j\geq0} 2^{-jp(\frac{1}{2}-(\alpha \beta-\varepsilon)+\frac{1}{p})}\sum_{k=1}^{2^j}|u_{jk}-z_{jk}|^p  \leq 2^{-p\alpha \beta}A^{p/2}_N.$$
  Which finishes the proof of Theorem \ref{Ito Nisio} by applying \eqref{MAN} and Theorem \ref{Besov} .
\end{proof}
\begin{rem}
  When $\beta=1$ we get the It\^{o}-Nisio theorem for the fractional Brownian motion with $\alpha >\frac{1}{2}$.
\end{rem}


%
%



\end{document}